\definecolor{clemson-orange}{RGB}{234,106,32}
\definecolor{chicago-maroon}{RGB}{128,0,0}
\definecolor{cincinnati-red}{RGB}{190,0,0}
\definecolor{soft-cyan}{RGB}{68,85,90}
\newcolumntype{L}[1]{>{\raggedright\let\newline\\\arraybackslash\hspace{0pt}}m{#1}}
\newcolumntype{C}[1]{>{\centering\let\newline\\\arraybackslash\hspace{0pt}}m{#1}}
\newcolumntype{R}[1]{>{\raggedleft\let\newline\\\arraybackslash\hspace{0pt}}m{#1}}
\newcommand{\bb}{\mathbb}
\newcommand{\R}{\bb R}
\newcommand{\Z}{{\bb Z}}
\newcommand{\N}{{\bb N}}
\theoremstyle{definition}
\newtheorem{theorem}{Theorem}[section]
\newtheorem{lemma}[theorem]{Lemma}
\newtheorem{corollary}[theorem]{Corollary}
\newtheorem{prop}[theorem]{Proposition}
\newtheorem{definition}[theorem]{Definition}
\newtheorem{example}[theorem]{Example}
\DeclareMathOperator*{\conv}{conv}
\DeclareMathOperator*{\poly}{poly}
\newcommand{\D}{\mathcal{D}}
\newcommand{\floor}[1]{\left\lfloor #1 \right\rfloor}
\newcommand{\ceil}[1]{\left\lceil #1 \right\rceil}
\newcommand{\cS}{\mathcal{S}}
\newcommand{\CP}{\mathcal{CP}}
\numberwithin{equation}{section}  
\title{Complexity of branch-and-bound and cutting planes in mixed-integer optimization - II}
\author{Amitabh Basu\thanks{Department of Applied Mathematics and Statistics, Johns Hopkins University, Baltimore, MD, USA ({\tt basu.amitabh@jhu.edu}, {\tt hjiang32@jhu.edu}).} 
\and Michele Conforti\thanks{Dipartimento di Matematica ``Tullio Levi-Civita'', Universit\`a degli Studi Padova, Italy ({\tt conforti@math.unipd.it}, {\tt disumma@math.unipd.it}).}
\and Marco Di Summa\footnotemark[2]
\and Hongyi Jiang\footnotemark[1]}
\date{\today}
\begin{document}

\maketitle

\begin{abstract}
We study the complexity of cutting planes and branching schemes from a theoretical point of view. We give some rigorous underpinnings to the empirically observed phenomenon that combining cutting planes and branching into a branch-and-cut framework can be orders of magnitude more efficient than employing these tools on their own. In particular, we give general conditions under which a cutting plane strategy and a branching scheme give a provably exponential advantage in efficiency when combined into branch-and-cut. The efficiency of these algorithms is evaluated using two concrete measures: number of iterations and sparsity of constraints used in the intermediate linear/convex programs. To the best of our knowledge, our results are the first mathematically rigorous demonstration of the superiority of branch-and-cut over pure cutting planes and pure branch-and-bound.
\end{abstract}

\section{Introduction} In this paper, we consider the following mixed-integer optimization problem:

\begin{equation}\label{eq:main-opt}
\begin{array}{rcll}
\sup\limits & \langle c, x \rangle && \\
\textrm{s.t.} & x &\in & C \cap (\Z^n\times \R^d)
\end{array}
\end{equation}
where $C$ is a closed, convex set in $\R^{n+d}$.

State-of-the-art algorithms for integer optimization are based on two ideas that are at the origin of mixed-integer programming and have been constantly refined: {\em cutting planes} and {\em branch-and-bound}. Decades of theoretical and experimental research into both these techniques is at the heart of the outstanding success of integer programming solvers. Nevertheless, we feel that there is lot of scope for widening and deepening our understanding of these tools. We have recently started building foundations for a rigorous, quantitative theory for analyzing the strengths and weaknesses of cutting planes and branching~\cite{basu-BB-CP}. We continue this project in the current manuscript.

In particular, we provide a theoretical framework to explain an empirically observed phenomenon: algorithms that make a combined use of both cutting planes and branching techniques are more efficient (sometimes by orders of magnitude), compared to their stand alone use in algorithms. We hope that our insights can contribute to a better and more precise understanding of the interaction of cutting planes and branching: which cutting plane schemes and branching schemes complement each other with concrete, provable gains obtained with their combined use, as opposed to not? Not only is a theoretical understanding of this phenomenon lacking, a deeper understanding of the interaction of these methods is considered to be important by both practitioners and theoreticians in the mixed-integer optimization community. To quote an influential computational survey~\cite{lodi2010mixed} ``... it seems that a tighter coordination of the two most fundamental ingredients of the solvers, branching and cutting, can lead to strong improvements."

The main computational burden in any cutting plane or branch-and-bound or branch-and-cut algorithm is the solution of the intermediate convex relaxations. Thus, there are two important aspects to deciding how efficient such an algorithm is: 1) How many linear programs (LPs) or convex optimization problems are solved? 2) How computationally challenging are these convex problems? The first aspect has been widely studied using the concepts of proof size and rank; see~\cite{dash2002exponential,dash2005exponential,dash2010complexity,chvatal1989cutting,chvatal1984cutting,chvatal1980hard,cook2001matrix,bockmayr1999chvatal,eisenbrand2003bounds,rothvoss20130} for a small sample of previous work. Formalizing the second aspect is somewhat tricky and we will focus on a very specific aspect: the {\em sparsity} of the constraints describing the linear program. The collective wisdom of the optimization community says that sparsity of constraints is a highly important aspect in the efficiency of linear programming~\cite{bixby2002solving,eldersveld1992block,reid1982sparsity,vanderbei-sparsity}. Additionally, most successful mixed-integer optimization solvers use sparsity as a criterion for cutting plane selection; see~\cite{dey2015approximating,dey2015some,dey2018analysis} for an innovative line of research. Compared to cutting planes, sparsity considerations have not been as prominent in the choice of branching schemes. This is primarily because for variable disjunctions sparsity is not an issue, and there is relatively less work on more general branching schemes; see~\cite{aardal2000market,pataki2010basis,beame_et_al:LIPIcs:2018:8341,dadush2020complexity,mahmoud2013achieving,ostrowski2008constraint,owen2001experimental,cornuejols2011improved,mahajan2009experiments,karamanov2011branching}. In our analysis, we are careful about the sparsity of the disjunctions as well -- see Definition~\ref{def:sparsity} below.

\subsection{Framework for mathematical analysis.} We now present the formal details of our approach. A {\em cutting plane} for the feasible region of~\eqref{eq:main-opt} is a halfspace $H = \{x \in \R^{n+d} : \langle a, x \rangle \leq \delta\}$ such that $C \cap (\Z^n \times \R^d) \subseteq H$. The most useful cutting planes are those that are not valid for $C$, i.e., $C \not\subseteq H$. There are several procedures used in practice for generating cutting planes, all of which can be formalized by the general notion of a {\em cutting plane paradigm}. A cutting plane paradigm is a function $\CP$ that takes as input any closed, convex set $C$ and outputs a (possibly infinite) family $\CP(C)$ of cutting planes valid for $C \cap (\Z^n \times \R^d)$. Two well-studied examples of cutting plane paradigms are the {\em Chv\'atal-Gomory cutting plane paradigm}~\cite[Chapter 23]{sch} and the {\em split cut paradigm}~\cite[Chapter 5]{conforti2014integer}. We will assume that all cutting planes are rational in this paper. 

State-of-the-art solvers embed cutting planes into a systematic enumeration scheme called {\em branch-and-bound}. The central notion is that of a {\em disjunction}, which is a union of polyhedra $D = Q_1 \cup \ldots \cup Q_k$ such that $\Z^n \times \R^d \subseteq D$, i.e., the polyhedra together cover all of $\Z^n \times \R^d$. One typically uses a (possibly infinite) family of disjunctions for potential deployment in algorithms. A well-known example is the family of {\em split disjunctions} that are of the form $D_{\pi,\pi_0}:= \{x\in \R^{n+d} : \langle \pi, x \rangle \leq \pi_0\} \cup \{x\in \R^{n+d} : \langle \pi, x \rangle \geq \pi_0+1\}$, where $\pi \in \Z^n \times \{0\}^d$ and $\pi_0\in \Z$. When the first $n$ coordinates of $\pi$ correspond to a standard unit vector, we get {\em variable disjunctions}, i.e., disjunctions of the form $\{x: x_i \leq \pi_0\} \cup \{x : x_i \geq \pi_0+1\}$, for $i=1, \ldots, n$.

A family of disjunctions $\mathcal{D}$ can also form the basis of a cutting plane paradigm. Given any disjunction $D$, any halfspace $H$ such that $C \cap D \subseteq H$ is a cutting plane, since $C \cap (\Z^n \times \R^d) \subseteq C \cap D$ by definition of a disjunction. The corresponding cutting plane paradigm $\CP(C)$, called {\em disjunctive cuts based on $\D$}, is the family of all such cutting planes derived from disjunctions in $\D$. Two well-known examples are the family of {\em split cuts}, based on the family of split disjunctions defined above, and the family of {\em lift-and-project cuts} derived from variable disjunctions.

In the following we assume that all convex optimization problems that need to be solved have an optimal solution or are infeasible.

\begin{definition}\label{def:BC-algorithm} A {\em branch-and-cut algorithm based on a family $\mathcal{D}$ of disjunctions and a cutting plane paradigm $\CP$} maintains a list $\mathcal{L}$ of {\em convex subsets} of the initial set $C$ which are guaranteed to contain the optimal point, and a lower bound $LB$ that stores the objective value of the best feasible solution found so far (with $LB=-\infty$ if no feasible solution has been found). At every iteration, the algorithm selects one of these subsets $N \in \mathcal{L}$ and solves the convex optimization problem $\sup\{ \langle c, x\rangle: x \in N\}$ to obtain $x^N$. If the objective value is less than or equal to $LB$, then this set $N$ is discarded from the list $\mathcal{L}$. Else, if $x^N$ satisfies the integrality constraints, $LB$ is updated with the value of $x^N$ and $N$ is discarded from the list. Otherwise, the algorithm makes a decision whether to {\em branch} or to {\em cut}. In the former case, a disjunction $D = (Q_1 \cup \ldots \cup Q_k) \in \mathcal{D}$ is chosen such that $x^N \not\in D$ and the list is updated $\mathcal{L} := \mathcal{L} \setminus \{N\} \cup \{Q_1\cap N, \ldots, Q_k\cap N\}$. If the decision is to cut, then the algorithm selects a cutting plane $H\in \CP(N)$ such that $x^N \not\in H$, and updates the relaxation $N$ by adding the cut $H$, i.e., updates $\mathcal{L} := \mathcal{L} \setminus \{N\} \cup \{N\cap H\}$.
 \end{definition}

Motivated by the above, we will refer to a family $\D$ of disjunctions also as a {\em branching scheme}. In a branch-and-cut algorithm, if one always chooses to add a cutting plane and never uses a disjunction to branch, then it is said to be a {\em (pure) cutting plane algorithm} and if one does not use any cutting planes ever, then it is called a {\em (pure) branch-and-bound algorithm}. We note here that in practice, when a decision to cut is made, several cutting planes are usually added as opposed to just one single cutting plane like in Definition~\ref{def:BC-algorithm}. In our mathematical framework, allowing only a single cut makes for a seamless generalization from pure cutting plane algorithms, and also makes quantitative analysis easier.

\begin{definition}\label{def:BC-tree} The execution of any branch-and-cut algorithm on a mixed-integer optimization instance can be represented by a tree. Every convex relaxation $N$ processed by the algorithm is denoted by a node in the tree. If the optimal value for $N$ is not better than the current lower bound, or is integral, $N$ is a leaf. Otherwise, in the case of a branching, its children are $Q_1\cap N, \ldots, Q_k\cap N$, and in the case of a cutting plane, there is a single child representing $N \cap H$ (we use the same notation as in Definition~\ref{def:BC-algorithm}). This tree is called the {\em branch-and-cut tree} ({\em branch-and-bound tree}, if no cutting planes are used). If no branching is done, this tree (which is really a path) is called a {\em cutting plane proof}. The {\em size} of the tree or proof is the total number of nodes.
\end{definition}

\paragraph{Proof versus algorithm.} Although we use the word ``algorithm" in Definition~\ref{def:BC-algorithm}, it is technically a {\em non-deterministic} algorithm, or equivalently, a proof schema or proof system for optimality~\cite{arora2009computational} (leaving aside the question of finite termination for now). This is because no indication is given on how the important decisions are made: Which set $N$ to process from $\mathcal{L}$? Branch or cut? Which disjunction or cutting plane to use? If these are made concrete, one would obtain a standard deterministic algorithm (assuming, for the moment, finite termination on all instances). Nevertheless, the proof system is very useful for obtaining information theoretic lower bounds on the efficiency of any deterministic branch-and-cut algorithm. Moreover, one can prove the validity of any upper bound on the objective, i.e., the validity of $\langle c, x \rangle \leq \gamma$ by exhibiting a branch-and-cut tree where this inequality is valid for all the leaves. If $\gamma$ is the optimal value, this is a proof of optimality, but one may often be interested in the branch-and-cut/branch-and-bound/cutting plane proof complexity of other valid inequalities as well. The connections between integer programming and proof complexity has a long history;  see~\cite{beame_et_al:LIPIcs:2018:8341,dadush2020complexity,bonet1997lower,razborov2017width,impagliazzo1994upper,buss1996cutting,cook1987complexity,goerdt1990cutting,goerdt1991cutting,clote1992cutting,pudlak1997lower,pudlak1999complexity,krajivcek1998discretely,kojevnikov2007improved,grigoriev2002complexity}, to cite a few. Our results can be interpreted in the language of proof complexity as well.

Another subtlety to keep in mind is that one could add to the power of such a branch-and-cut proof system by relaxing the requirement that the current optimal solution $x^N$ should be eliminated by the chosen disjunction or cutting plane. This can make a difference -- an instance may have a finite proof in the strengthened system while no finite proof exists in the original system~\cite{owen2001disjunctive}. When required, we will use the phrase {\em restricted proof} to refer to a proof that imposes the restriction of eliminating $x^N$ at every node $N$ of the proof tree.
\bigskip

Recall that we quantify the complexity of any branch-and-bound/cutting plane/branch-and-cut algorithm using two aspects: the number of LP relaxations processed and the sparsity of the constraints defining the LPs. The number of LP relaxations processed is given precisely by the number of nodes in the corresponding tree (Definition~\ref{def:BC-tree}). Sparsity is formalized in the following definitions.

\begin{definition}\label{def:sparsity} Let $1 \leq s \leq n+d$ be a natural number that we call the {\em sparsity parameter}. Then the pair $(\CP, s)$ will denote the restriction of the paradigm $\CP$ that only reports the sub-family of cutting planes that can be represented by inequalities with at most $s$ non-zero coefficients; the notation $(\CP,s)(C)$ will be used to denote this sub-family for any particular convex set $C$. Similarly, $(\D,s)$ will denote the sub-family of the family of disjunctions $\D$ such that each polyhedron in the disjunction has an inequality description where every inequality has at most $s$ non-zero coefficients.\end{definition}

Cutting plane proof systems with restrictions on the ``depth" of the cutting planes have been considered in the proof complexity literature; see~\cite{goerdt1991cutting,hirsch2005simulating}.

\subsection{Our Results}

\subsubsection{Sparsity versus size.} Our first set of results considers the trade-off between the sparsity parameter $s$ and the number of LPs processed, i.e., the size of the tree. There are several avenues to explore in this direction. For example, one could compare pure branch-and-bound algorithms based on $(\D, s_1)$ and $(\D, s_2)$, i.e., fix a particular disjunction family $\D$ and consider the effect of sparsity on the branch-and-bound tree sizes. One could also look at two different families of disjunctions $\D_1$ and $\D_2$ and look at their relative tree sizes as one turns the knob on the sparsity parameter. Similar questions could be asked about cutting plane paradigms $(\CP_1, s_1)$ and $(\CP_2, s_2)$ for interesting paradigms $\CP_1, \CP_2$. Even more interestingly, one could compare pure branch-and-bound and pure cutting plane algorithms against each other.

We first focus on pure branch-and-bound algorithms based on the family $\cS$ of split disjunctions. A very well-known example of pure integer instances (i.e., $d=0$) due to Jeroslow~\cite{Jeroslow1974} shows that if the sparsity of the splits used is restricted to be 1, i.e., one uses only variable disjunctions, then the branch-and-bound algorithm will generate an exponential (in the dimension $n$) sized tree. On the other hand, if one allows fully dense splits, i.e., sparsity is $n$, then there is a tree with just 3 nodes (one root, and two leaves) that solves the problem. We ask what happens in Jeroslow's example if one uses split disjunctions with sparsity $s > 1$. Our first result shows that unless the sparsity parameter $s = \Omega(n)$, one cannot get constant size trees, and if the sparsity parameter $s = O(1)$, then the tree is of exponential size.

\begin{theorem}\label{thm::exp-Jeroslow-ineq}
Let $H$ be the halfspace defined by inequality $2\sum_{i=1}^nx_i\leq n$, where $n$ is an odd number. Consider the instances of (\ref{eq:main-opt}) with $d=0$, the objective $\sum_{i=1}^nx_i$ and $C=H \cap [0,1]^n$. The optimum is $\floor{\frac{n}{2}}$, and any branch-and-bound proof with sparsity $s\leq \floor{\frac{n}{2}}$ that certifies $\sum_{i=1}^nx_i\leq \floor{\frac{n}{2}}$ has size at least  $\Omega(2^{\frac{n}{2s}})$.
\end{theorem}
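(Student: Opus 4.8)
The plan is to show that the optimal face of the LP relaxation survives an exponential number of branchings, forcing the search tree to be completely binary down to depth $\floor{n/(2s)}$. Write $C=\{x\in[0,1]^n:2\sum_i x_i\le n\}$ and let $F:=\{x\in[0,1]^n:\sum_i x_i=n/2\}$ be the face of $C$ on which the objective attains its LP maximum $n/2$. Since $n$ is odd, $n/2\notin\Z$, so $F$ contains no integer point, and any relaxation whose LP value equals $n/2$ has a fractional optimal vertex and hence is pruned neither by integrality nor by the incumbent bound $\floor{n/2}$. Put $d:=\floor{n/(2s)}$; the hypothesis $s\le\floor{n/2}$ forces $d\ge1$ and $ds\le n/2$.

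The heart of the argument is the claim: \emph{in any branch-and-bound proof with sparsity at most $s$ that certifies $\sum_i x_i\le\floor{n/2}$, for every $k\le d$ the proof tree is complete down to depth $k$, and every node at depth $k$ is feasible with LP value exactly $n/2$.} Granting this, every node at depth $\le d$ is an internal node (by the previous paragraph), and — the disjunctions being splits — has exactly two children; hence the complete binary tree of depth $d$ is a subtree of the proof and the proof has at least $2^{d}\ge 2^{\,n/(2s)-1}=\Omega(2^{n/(2s)})$ nodes.

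I would prove the claim by induction on $k$, the case $k=0$ being the root $C$. For the step, let $N$ be a node at depth $k\le d$ with parent $N'$ at depth $k-1$; by induction $N'$ is feasible with LP value $n/2$, hence internal, so $N=N'\cap H$ where $H$ is one of the two halfspaces of a split disjunction $D_{\pi,\pi_0}$ chosen with $x^{N'}\notin D_{\pi,\pi_0}$, i.e.\ $\langle\pi,x^{N'}\rangle\in(\pi_0,\pi_0+1)$, and $x^{N'}$ is an optimal vertex of $N'$, so $x^{N'}\in F\cap N'$. Let $T$ be the union of the supports of the disjunctions used strictly above $N'$, and $S:=T\cup\mathrm{supp}(\pi)$, so $|S|\le ks\le ds\le n/2$ and at least $n/2$ coordinates lie outside $S$. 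Now $F\cap N'$ is cut out by the box $[0,1]^n$, the equation $\sum_i x_i=n/2$, and inequalities whose supports lie in $T$; consequently, starting from $x^{N'}$ and altering only coordinates in $(\mathrm{supp}(\pi)\setminus T)\cup([n]\setminus S)$ keeps us inside $F\cap N'$ — the first group moves $\langle\pi,\cdot\rangle$ without disturbing the $T$-supported inequalities, and the at-least-$n/2$ free coordinates outside $S$ absorb the change needed to keep $\sum_i x_i=n/2$. Over all such perturbations $\langle\pi,x\rangle$ attains every value in the interval $[\,p+\alpha,\ p+\beta\,]$, where $p:=\sum_{j\in\mathrm{supp}(\pi)\cap T}\pi_j x^{N'}_j$ and $\alpha,\beta$ are the sum of the negative, respectively the positive, coefficients among $\{\pi_j:j\in\mathrm{supp}(\pi)\setminus T\}$; both $\alpha,\beta\in\Z$. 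Since $\langle\pi,x^{N'}\rangle$ lies in this interval and in $(\pi_0,\pi_0+1)$, \emph{provided that $p\in\Z$} one gets $p+\alpha\le\pi_0$ and $p+\beta\ge\pi_0+1$; hence $F\cap N'$ meets both halfspaces of $D_{\pi,\pi_0}$, so — whichever $H$ was used — $F\cap N\neq\emptyset$ and $N$ is feasible with LP value $n/2$, closing the induction.

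The crux, and the main obstacle, is precisely the integrality of $p$: a valid branching at $N'$ must not "rebranch" on already‑used coordinates in a way that makes $x^{N'}$ genuinely fractional there. This is where the requirement $x^{N'}\notin D_{\pi,\pi_0}$ (the restricted‑proof condition) enters essentially, together with sparsity: along the path to $N'$ the objective $\sum_i x_i$ forces each LP optimum to be integral on the coordinates already branched by its ancestors, so the fractional coordinate of $x^{N'}$ that $D_{\pi,\pi_0}$ is required to cut off is one not yet touched, and $\langle\pi,x^{N'}\rangle$ behaves integrally on $T$. Making this precise — carrying a suitable integrality invariant through the induction alongside the value-$n/2$ invariant — is the technical core; once it is in place, the exponential lower bound follows from the completeness of the binary tree as in the counting above.
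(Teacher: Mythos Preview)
Your approach diverges from the paper's in a way that introduces a genuine gap. The paper never tracks LP optima through the tree; instead it tracks \emph{integer} feasible points. It calls a split ``true'' if both sides retain an integer point of the parent, and ``false'' otherwise, and it counts nodes by \emph{generation} (number of true splits on the path) rather than by depth. The key lemma shows that any node containing an integer point whose generation variable set has size at most $\lfloor n/2\rfloor - s$ still has LP value $n/2$; the proof constructs specific $0/1$ points and exploits that false splits, by definition, cannot remove any integer point. The integrality issues you flag simply never arise, because every witness used is already in $\{0,1\}^n$.

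Your proposed route, by contrast, hinges on the claim that the LP optimum $x^{N'}$ is integral on the set $T$ of coordinates touched by earlier branchings, so that $p=\sum_{j\in T\cap\mathrm{supp}(\pi)}\pi_j x^{N'}_j\in\Z$. This invariant is false. Take $n=9$, $s=2$, branch at the root on $x_5+x_6\ge 1$, so $T=\{5,6\}$. The point $(1,1,1,0,1,\tfrac12,0,0,0)$ is a vertex of the child $N_2$ with $\sum_i x_i=9/2$ (check: eight box constraints plus the objective face are tight and independent, while $x_5+x_6=\tfrac32>1$ is slack). The algorithm is free to pick this vertex as $x^{N_2}$, and it has $x_6=\tfrac12$ fractional on $T$; if the next split uses $\pi=e_6+e_7$ then $p=\tfrac12$. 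So the ``integrality invariant'' you propose to carry through the induction cannot be established in general, and the perturbation argument collapses exactly at the point you identified as the crux. The objective $\sum_i x_i$ does \emph{not} force integrality on already-branched coordinates: it is indifferent among all optimizers on the face $F$, many of which are fractional inside $T$.

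A secondary point: even if your argument could be repaired, it leans essentially on the restricted-proof requirement that each branching cut off the current LP optimum, whereas the paper's integer-point argument applies to unrestricted branch-and-bound proofs as well. The fix is to abandon LP optima as the tracked objects and instead carry an integer witness (more precisely, a whole family of integer optima of value $\lfloor n/2\rfloor$ agreeing on the true-split coordinates) through the tree, exactly as the paper does.
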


The above instance is a modification of Jeroslow's instance; Jeroslow's instance uses an equality constraint instead of an inequality. However, the same argument applies for Jeroslow's instance.

\begin{corollary}\label{cor:exp-Jeroslow-eq}
Let $H$ be the hyperplane defined by equality $2\sum_{i=1}^nx_i= n$, where $n$ is an odd number. Consider the instances of (\ref{eq:main-opt}) with $d=0$, the objective $\sum_{i=1}^nx_i$ and $C=H\cap [0,1]^n$. This problem is infeasible, and any branch-and-bound proof of infeasibility with sparsity $s\leq \floor{\frac{n}{2}}$ has size at least $\Omega(2^{\frac{n}{2s}})$.
\end{corollary}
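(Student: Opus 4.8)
The plan is to re-run the argument behind Theorem~\ref{thm::exp-Jeroslow-ineq} on this instance; the equality/infeasibility setting only makes the bookkeeping lighter. First I would record the simplification special to $C=\{x\in[0,1]^n:2\sum_{i}x_i=n\}$: the objective $\sum_i x_i$ is identically $\tfrac n2$ on $C$, hence on every relaxation $N\subseteq C$ that a pure branch-and-bound run produces. So the incumbent $LB$ never moves from $-\infty$ (the instance has no integer feasible point), no relaxation has an integral optimum, and a node $N$ of the branch-and-bound tree is a leaf exactly when $N=\varnothing$. Thus a branch-and-bound proof of infeasibility with sparsity $s$ is nothing but a branch-and-bound tree built from split disjunctions of sparsity at most $s$ all of whose leaves are empty, and it suffices to lower bound the size of any such tree.

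Next I would import the bookkeeping from the proof of Theorem~\ref{thm::exp-Jeroslow-ineq}. For a node $N$ at depth $d$ let $T_N\subseteq\{1,\dots,n\}$ collect the coordinates that appear in the (at most $d$) split disjunctions used on the root-to-$N$ path; then $|T_N|\le ds$, and $N$ is the slice of $C$ by halfspaces whose nonconstant parts involve only the coordinates of $T_N$. The key lemma, taken over verbatim, is a feasibility statement: if $ds\le\lfloor n/2\rfloor$ then $N\neq\varnothing$, because any assignment to the coordinates of $T_N$ that satisfies those halfspaces can be completed on the $n-|T_N|\ge\tfrac{n+1}2$ free coordinates so as to make $\sum_i x_i=\tfrac n2$ — the leftover mass $\tfrac n2-\sum_{i\in T_N}x_i$ lies in $[0,\,n-|T_N|]$ and can be distributed among the free coordinates. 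Hence no node at depth at most $\lfloor\lfloor n/2\rfloor/s\rfloor$ is a leaf, so every surviving root-to-leaf path has length greater than $\lfloor n/2\rfloor/s$. Replacing the inequality $2\sum_i x_i\le n$ of Theorem~\ref{thm::exp-Jeroslow-ineq} by the equality $2\sum_i x_i=n$ changes none of this, since lying on the hyperplane is the same as lying on the boundary of the halfspace; and the conclusion that the tree then has $\Omega(2^{n/(2s)})$ nodes is exactly the output of the counting argument in that proof, which uses only the combinatorics of $[0,1]^n$, the constraint on $\sum_i x_i$, and the sparsity of the disjunctions.

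The step I expect to carry the real weight — and the one I would be most careful to take unchanged from the proof of Theorem~\ref{thm::exp-Jeroslow-ineq} — is precisely this last passage from ``every leaf is deep'' to an exponential lower bound on the number of nodes. One cannot simply count the levels of a complete binary tree: a split disjunction may reuse already-active coordinates, and exploiting this a prover can squeeze a handful of coordinates onto a fractional face and then peel off an infeasible child at small depth, so the tree need not be complete down to depth $\lfloor n/2\rfloor/s$. The argument of Theorem~\ref{thm::exp-Jeroslow-ineq} must therefore be a more global, charging/potential-style argument — presumably one assigns to each node a weight recording how much of the near-balanced integer mass $\{b\in\{0,1\}^n:\sum_i b_i\in\{\tfrac{n-1}2,\tfrac{n+1}2\}\}$ it still controls, and bounds how a single sparsity-$s$ split can redistribute that weight between its two children, forcing many leaves. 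The only thing I would then need to verify is that this argument, which nowhere uses that the problem is a maximization rather than a feasibility question, applies to the present instance as well. That is where all of the difficulty sits.
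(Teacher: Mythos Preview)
Your high-level plan---re-run the argument of Theorem~\ref{thm::exp-Jeroslow-ineq}, noting that on the hyperplane the objective is identically $n/2$ so leaves are precisely the empty nodes---is exactly what the paper does. But your reconstruction of that argument has a genuine gap, and it is not the gap you think it is.

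The lemma you state, ``if $ds\le\lfloor n/2\rfloor$ then $N\neq\varnothing$,'' is simply false. At depth one already the branch $x_1\le-1$ of the split $\{x_1\le-1\}\cup\{x_1\ge0\}$ is empty. Your justification (``any assignment to the coordinates of $T_N$ that satisfies those halfspaces can be completed\dots'') presupposes that such an assignment exists in $[0,1]^{T_N}$, which need not hold. So ``every leaf is deep'' is not what one proves, and the difficulty you flag in your last paragraph---that shallow leaves can occur and so one needs some global charging argument---is an artifact of the wrong lemma, not of the actual proof.

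The paper's mechanism is different. One distinguishes, along the path to a node $N$, the \emph{true} splits (those for which both sides meet the parent's integer hull) from the \emph{false} ones, and lets the generation variable set $I$ collect only the coordinates appearing in the true splits. The lemma (Lemma~\ref{lem::true-split}) is then: if $N$ contains at least one integer point and $|I|\le\lfloor n/2\rfloor-s$, then $N$ still has LP value $n/2$ (trivial here) and, crucially, contains the full set of ``completions'' of any feasible integer point on the $I$-coordinates. One counts not by depth but by \emph{generation} $m$ (number of true splits on the path). A generation-$m$ node with an integer point and $m\le\frac1s\lfloor n/2\rfloor-1$ is not a leaf; following its subtree, false splits pass all integer points to one child (still generation $m$), while the first true split produces two children of generation $m+1$, each containing integer points. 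Hence every such node has at least two generation-$(m+1)$ descendants with integer points, giving $\ge2^m$ nodes of generation $m$. This is a direct doubling count, not a potential argument, and it goes through verbatim for the equality instance.
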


The bounds in Theorem~\ref{thm::exp-Jeroslow-ineq} give a constant lower bound when $s = \Omega(n)$. We establish another lower bound which does better in this regime.

\begin{theorem}\label{thm::exp-Jeroslow-ineq-II}
Let $H$ be the halfspace defined by inequality $2\sum_{i=1}^nx_i\leq n$, where $n$ is an odd number. Consider the instances of (\ref{eq:main-opt}) with $d=0$, the objective $\sum_{i=1}^nx_i$ and $C=H \cap [0,1]^n$. The optimum is $\floor{\frac{n}{2}}$, and any branch-and-bound proof with sparsity $s\leq \floor{\frac{n}{2}}$ that certifies $\sum_{i=1}^nx_i\leq \floor{\frac{n}{2}}$ has size at least $\Omega\left(\sqrt{\frac{n(n-s)}{s}}\right)$.\end{theorem}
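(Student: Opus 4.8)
The plan is to prove a lower bound on the size of any branch-and-bound tree for the modified Jeroslow instance, working directly with the tree structure rather than passing through a potential/volume argument. The key geometric fact about the instance is that the feasible integer points are exactly the $0/1$ vectors with at most $\floor{n/2}$ ones, while the fractional point $(\tfrac12,\dots,\tfrac12)$ achieves objective value $n/2 > \floor{n/2}$; thus any leaf $N$ of a valid proof must have $\max\{\langle c,x\rangle : x\in N\} \le \floor{n/2}$. First I would fix attention on the point $x^\ast = (\tfrac12,\dots,\tfrac12)$ and note that it lies in $C$, so the root has optimal value $n/2$; hence $x^\ast$ is \emph{cut off} somewhere along every root-to-leaf path, i.e.\ for every leaf $N$ we have $x^\ast \notin N$. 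Each branching step at a node uses a split disjunction $D_{\pi,\pi_0}$ with $\pi$ having at most $s$ nonzeros (by the sparsity restriction $s\le\floor{n/2}$), and a child is obtained by intersecting with one side of the split.

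The heart of the argument is a counting/packing estimate: how much of the ``mass'' of the cube near $x^\ast$ can a single sparse split disjunction remove, and therefore how many leaves are forced. Concretely, I would associate to each leaf $N$ the set $S_N$ of $0/1$ points of the cube that lie in $N$; since the $0/1$ points are feasible and must be covered by the leaves, $\{S_N\}$ covers all $2^n$ vertices of the cube (more carefully, the leaves whose LP value exceeds $LB$ are discarded, so one must be a little careful and instead track the vertices with exactly $\floor{n/2}$ or $\ceil{n/2}$ ones — the ``middle layers'' — which have objective value $\ge \floor{n/2}$ and so cannot simply be pruned by the bound without being genuinely separated). The middle two layers of the cube have size $\binom{n}{\lceil n/2\rceil} = \Theta(2^n/\sqrt n)$. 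The second ingredient is that a single split disjunction with support of size $\le s$, restricted to the vertices of the cube, partitions the relevant vertex set into pieces, and on the ``wrong'' side the constraint $2\sum x_i \le n$ together with the split inequality still admits a fractional optimum of value $\floor{n/2}$ unless the split genuinely separates many middle-layer vertices — I would quantify that a sparse split can ``resolve'' at most $O(\sqrt{s})$-fraction, or more precisely can reduce the relevant count by a factor controlled by $\sqrt{s/n}$ per branching, via an anti-concentration (Littlewood–Offord type) estimate on $\sum_{i\in\mathrm{supp}(\pi)} \pi_i \epsilon_i$ for $\epsilon\in\{0,1\}^s$. Chaining this over the depth of the tree and comparing with the $\Theta(2^n/\sqrt n)$ vertices that must be separated yields that the number of leaves is at least $\Omega\!\left(\sqrt{n(n-s)/s}\right)$.

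More precisely, the cleaner route — and the one I expect actually yields the stated bound — is to reduce to a one-dimensional estimate. Along a root-to-leaf path the accumulated branching inequalities cut off $x^\ast$; consider the first branching on the path, using split $D_{\pi,\pi_0}$ with $\|\pi\|_0 \le s$. Because $\langle \pi, x^\ast\rangle$ is strictly between $\pi_0$ and $\pi_0+1$ is what it means to cut $x^\ast$, and $\pi$ has integer entries supported on a set $T$ of size $\le s$, the two sides of the split, intersected with $[0,1]^n$ and the hyperplane-ish constraint, each still contain a face of dimension $\ge n-s$ on which the objective restricted there forces a large value — essentially the problem restricted to the $n-|T|$ coordinates outside $T$ is again a Jeroslow-type instance of dimension $n-|T| \ge n-s$, whose LP optimum (with the remaining budget) is still close to $(n-|T|)/2$. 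Iterating, after $t$ branchings one has ``used up'' at most $ts$ coordinates, so as long as $ts < n$ one of the leaves still has LP value $> \floor{n/2}$, contradiction; optimizing the bookkeeping (tracking not just the number of coordinates but the residual budget in the constraint $2\sum x_i\le n$, which shifts by an integer amount $\le s$ at each step, against a total slack of essentially $1$ at $x^\ast$, scaled) produces the $\sqrt{\cdot}$ shape: depth must be $\Omega(\sqrt{n(n-s)/s})$, hence size is at least that.

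The main obstacle, and where the real work lies, is the middle step: showing that a single sparse split disjunction cannot make too much progress, quantified sharply enough to get $\sqrt{n/s}$ rather than a cruder $n/s$ or $\log$-type bound. This requires an anti-concentration argument — controlling $\Pr_{\epsilon\in\{0,1\}^T}[\langle \pi,\epsilon\rangle = k]$ uniformly in $k$ for integer $\pi$ supported on $T$, which by Erdős's sharpening of the Littlewood–Offord lemma is $O(1/\sqrt{|T|}) = O(1/\sqrt{s})$ — combined with an inductive/amortized accounting across the tree that correctly tracks how the residual instance shrinks. Getting the constants and the interplay between the ``number of active coordinates'' and the ``slack budget'' right, so that the two lower bounds $2^{n/(2s)}$ (Theorem~\ref{thm::exp-Jeroslow-ineq}) and $\sqrt{n(n-s)/s}$ (this theorem) emerge from the two regimes $s = O(1)$ and $s = \Omega(n)$ of the same underlying estimate, is the delicate part; the rest is structural bookkeeping on the branch-and-bound tree that mirrors the proof of Theorem~\ref{thm::exp-Jeroslow-ineq}.
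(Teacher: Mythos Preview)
Your proposal circles near the right idea --- an anti-concentration estimate of Littlewood--Offord/Sperner type is indeed the key tool --- but there is a genuine gap in \emph{what} you are counting, and the ``depth'' route does not produce the stated $\sqrt{\cdot}$ bound.

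The objects that must be eliminated are not the $0/1$ vertices of the cube. A leaf $N$ is only required to satisfy $\max\{\sum_i x_i : x\in N\}\le \lfloor n/2\rfloor$, and such a leaf can (and typically will) still contain many integer points with exactly $\lfloor n/2\rfloor$ ones; those points achieve the target bound, so nothing forces them to be separated. Tracking middle-layer $0/1$ vertices therefore gives no lower bound on the number of leaves. What \emph{must} be eliminated from every leaf are the LP-optimal vertices of $C$, namely the points with exactly one coordinate equal to $\tfrac12$ and $\lfloor n/2\rfloor$ ones among the remaining $n-1$ coordinates; there are $n\binom{n-1}{\lfloor n/2\rfloor}$ of them, each with objective value $n/2>\lfloor n/2\rfloor$. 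Every such vertex lies in $C$, so along the root-to-leaf path it can only disappear by falling into the open slab $\{x:\pi_0<\langle\pi,x\rangle<\pi_0+1\}$ of some split disjunction used in the tree. The correct counting argument is then: (number of LP-optimal vertices) divided by (the maximum number of LP-optimal vertices any single sparse split slab can contain). The denominator is where the anti-concentration enters: a vertex with half-coordinate $\ell$ lies in the slab of a split supported on $T$ only if $\ell\in T$ and the $0/1$ restriction to $T\setminus\{\ell\}$ solves a fixed integer equation; Sperner's theorem bounds the number of such $0/1$ solutions by $\binom{|T|-1}{\lfloor |T|/2\rfloor}$, and combining with the choice of the remaining coordinates and a double-factorial estimate gives the bound $\binom{n-1}{\lfloor n/2\rfloor}\,O\!\left(\sqrt{ns/(n-s)}\right)$. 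Dividing yields $\Omega\!\left(\sqrt{n(n-s)/s}\right)$.

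Your second route --- arguing that after $t$ branchings at most $ts$ coordinates are ``used up'' and the residual instance still has LP optimum $>\lfloor n/2\rfloor$ --- does not give the square-root shape. At best it yields depth $\Omega(n/s)$ (which is weaker than $\sqrt{n(n-s)/s}$ exactly in the large-$s$ regime where this theorem is meant to improve on Theorem~\ref{thm::exp-Jeroslow-ineq}), and the step ``optimizing the bookkeeping \dots produces the $\sqrt{\cdot}$ shape'' is not an argument. The square root comes not from depth but from the ratio of two counts, and that ratio only appears once you track the fractional LP vertices rather than integer ones.
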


Next we consider the relative strength of cutting planes and branch-and-bound. Our previous work has studied conditions under which one method can dominate the other, depending on which cutting plane paradigm and branching scheme one chooses~\cite{basu-BB-CP}. For this paper, the following result from~\cite{basu-BB-CP} is relevant: for every convex 0/1 pure integer instance, any branch-and-bound proof based on variable disjunctions can be ``simulated" by a lift-and-project cutting plane proof without increasing the size of the proof (versions of this result for {\em linear} 0/1 programming were known earlier; see~\cite{dash2002exponential,dash2005exponential}). Moreover, in~\cite{basu-BB-CP} we constructed a family of stable set instances where lift-and-project cuts give exponentially shorter proofs than branch-and-bound. This is interesting because lift-and-project cuts are disjunctive cuts based on the same family of variable disjunctions, so it is not a priori clear that they have an advantage. These results were obtained with no regard for sparsity. We now show that once we also track the sparsity parameter, this advantage can disappear.

\begin{theorem}\label{thm:BB-vs-CP-sparsity}
Let $H$ be the halfspace defined by inequality $2\sum_{i=1}^nx_i\leq n$, where $n$ is an odd number. Consider the intances of (\ref{eq:main-opt}) with $d=0$, the objective $\sum_{i=1}^{\ceil{\frac{n}{2}}}x_i$ and $C=H\cap[0,1]^n$. The optimum is $\floor{\frac{n}{2}}$, and there is a branch-and-bound algorithm based on variable disjunctions, i.e., the family of split disjunctions with sparsity $1$, that certifies $\sum_{i=1}^{\ceil{\frac{n}{2}}}x_i\leq \floor{\frac{n}{2}}$ in $O(n)$ steps. However, any cutting plane for $C$ with sparsity $s\leq \floor{\frac{n}{2}}$ is trivial, i.e., valid for $[0,1]^n$, no matter what cutting plane paradigm is used to derive it.
\end{theorem}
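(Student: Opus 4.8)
The plan is to prove the two assertions of Theorem~\ref{thm:BB-vs-CP-sparsity} separately: (i)~build an explicit $O(n)$-node branch-and-bound proof using only variable disjunctions, and (ii)~show directly that every sparse halfspace valid for $C\cap\Z^n$ is already valid for $[0,1]^n$.

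For (i), write $n=2m+1$, so the objective is $\sum_{i=1}^{m+1}x_i$ and the target bound is $\floor{n/2}=m$. I would build a ``caterpillar'' tree that branches on $x_1,x_2,\dots$ in order. For $k=0,\dots,m$ let $N_k:=C\cap\{x_1=\cdots=x_k=1\}$ (so $N_0=C$). At $N_k$ the relaxation maximizes $k+\sum_{i=k+1}^{m+1}x_i$ subject to $\sum_{i=k+1}^n x_i\le m+\tfrac12-k$ and $0\le x_i\le 1$; since $m+\tfrac12-k<m+1-k$, a one-line computation gives optimal value $m+\tfrac12>m$, so $x^{N_k}$ is fractional and we branch on $x_{k+1}$. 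The child $N_k\cap\{x_{k+1}=0\}$ has relaxation value $k+(m-k)=m=\floor{n/2}$, attained by the integral point with $x_{k+2}=\cdots=x_{m+1}=1$ and all other free coordinates $0$; hence it is a leaf (prunable by integrality, and by bound once $LB=m$) on which $\sum_{i=1}^{m+1}x_i\le m$ holds. The other child is $N_{k+1}$. Iterating down to $k=m$ and branching on $x_{m+1}$, the ``up'' child $N_{m+1}=C\cap\{x_1=\cdots=x_{m+1}=1\}$ forces $\sum_{i=1}^n x_i\ge m+1>n/2$, so it is infeasible, hence a leaf. The resulting tree has $m+1$ internal nodes and $m+2$ leaves, i.e.\ $O(n)$ nodes, and $\sum_{i=1}^{m+1}x_i\le m$ holds at every leaf, so it is a valid branch-and-bound proof in the sense of Definitions~\ref{def:BC-algorithm} and \ref{def:BC-tree} using split disjunctions of sparsity $1$.

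For (ii), let $\langle a,x\rangle\le\delta$ be any cutting plane valid for $C\cap\Z^n$ whose left-hand side has at most $s\le\floor{n/2}=m$ nonzero coefficients. Define $x^*\in\{0,1\}^n$ by $x^*_i=1$ if $a_i>0$ and $x^*_i=0$ otherwise. Then $x^*$ has at most $m$ ones, so $\sum_{i=1}^n x^*_i\le m<n/2$, i.e.\ $x^*\in H$; thus $x^*\in C\cap\Z^n$. Moreover $\langle a,x^*\rangle=\sum_{i:\,a_i>0}a_i=\max_{x\in[0,1]^n}\langle a,x\rangle$. Validity of the cut at $x^*$ forces $\delta\ge\langle a,x^*\rangle=\max_{x\in[0,1]^n}\langle a,x\rangle$, hence $\{x:\langle a,x\rangle\le\delta\}\supseteq[0,1]^n$ and the cut is trivial. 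This argument never refers to how the cut was obtained, so it applies to every cutting plane paradigm; the same witness point $x^*$ (with $S=\{1,\dots,m\}$) also confirms the claimed optimum $\floor{n/2}$.

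The LP-value computations in part (i) are routine, and part (ii) is essentially a single observation, so I do not expect a genuine obstacle; the only thing to be careful about is the bookkeeping of Definitions~\ref{def:BC-algorithm}--\ref{def:BC-tree} (a leaf is a node that is infeasible, or whose relaxation value is integral, or is $\le LB$), which the construction in (i) respects. Conceptually, the point to highlight is that a support of size at most $\floor{n/2}$ is ``too small to see'' the constraint $2\sum_{i=1}^n x_i\le n$: the cube vertex maximizing the cut's linear functional already satisfies that constraint, so no sparse cut can separate anything from $[0,1]^n$, even though a single dense branching variable in (i) resolves the instance in linearly many steps.
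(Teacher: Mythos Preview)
Your proposal is correct and follows essentially the same approach as the paper. For part~(i) you build the identical caterpillar tree (branching on $x_1,\dots,x_{\lceil n/2\rceil}$, with the ``$x_{k+1}=0$'' children as leaves of value $\lfloor n/2\rfloor$ and the final ``all ones'' node infeasible), yielding $n+2$ nodes; for part~(ii) your single-witness argument via $x^*$ is a slightly slicker variant of the paper's proof, which instead observes that the whole face $V_S=\{x\in\{0,1\}^n:x_i=0\text{ for }i\notin S\}$ lies in $H\cap\{0,1\}^n$ and then extends by irrelevance of the variables outside $S$---the underlying idea is the same.
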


\subsubsection{Superiority of branch-and-cut.} We next consider the question of when combining branching and cutting planes is {\em provably} advantageous. For this question, we leave aside the complications arising due to sparsity considerations and focus only on the size of proofs. The following discussion and results can be extended to handle the issue of sparsity as well, but we leave it out of this extended abstract. 

Given a cutting plane paradigm $\CP$, and a branching scheme $\D$, are there families of instances where branch-and-cut based on $\CP$ and $\D$ does provably better than pure cutting planes based on $\CP$ alone and pure branch-and-bound based on $\D$ alone? If a cutting plane paradigm $\CP$ and a branching scheme $\D$ are such that either for every instance, $\CP$ gives cutting plane proofs of size at most a polynomial factor larger than the shortest branch-and-bound proofs with $\D$, or vice versa, for every instance $\D$ gives proofs of size at most polynomially larger than the shortest cutting plane proofs based on $\CP$, then combining them into branch-and-cut is likely to give no substantial improvement since one method can always do the job of the other, up to polynomial factors. As mentioned above, prior work \cite{basu-BB-CP} had shown that disjunctive cuts based on variable disjunctions (with no restriction on sparsity) dominate branch-and-bound based on variable disjunctions for pure 0/1 instances, and as a consequence branch-and-cut based on these paradigms is dominated by pure cutting planes. In the next theorem, we show that the situation completely reverses if one considers a broader family of disjunctions (still restricted to the pure integer case).

\begin{theorem}\label{thm::general-BB>=CP}
Let $C\subseteq \R^n$ be a closed, convex set. Let $k\in \N$ be a fixed natural number and let $\D$ be any family of disjunctions that contains all split disjunctions, such that all disjunctions in $\D$ have at most $k$ terms in the disjunction. If a valid inequality $\langle c,x\rangle\leq \delta$ for $C\cap \Z^n$ has a cutting plane proof of size $L$ using disjunctive cuts based on $\D$, then there exists a branch-and-bound proof of size at most $(k+1)L$ based on $\D$. Moreover, there is a family of instances where branch-and-bound based on split disjunctions solves the problem in $O(1)$ time whereas there is a polynomial lower bound on split cut proofs.
\end{theorem}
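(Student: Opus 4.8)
The plan is to prove the two halves separately. For the simulation statement, I would argue by induction on the length $L$ of the cutting plane proof. A cutting plane proof is a path $C = N_0 \supseteq N_1 \supseteq \cdots \supseteq N_L$, where $N_{i+1} = N_i \cap H_i$ and each $H_i = \{x : \langle a_i, x\rangle \le \delta_i\}$ is a disjunctive cut for $N_i$ coming from some disjunction $D_i = Q_1^i \cup \cdots \cup Q_{k_i}^i \in \D$ with $k_i \le k$, so that $N_i \cap D_i \subseteq H_i$. The key observation is the standard one for disjunctive cuts: since $N_i \cap Q_j^i \subseteq H_i$ for every $j$, the inequality $\langle a_i, x\rangle \le \delta_i$ is valid on each branch $N_i \cap Q_j^i$. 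So instead of adding the cut $H_i$, I branch on $D_i$ at the current node, producing children $N_i \cap Q_1^i, \ldots, N_i \cap Q_{k_i}^i$; on each child the cut inequality is already valid, hence $N_i \cap Q_j^i \subseteq H_i$ and thus $N_i \cap Q_j^i \subseteq N_i \cap H_i = N_{i+1}$. Therefore I can continue the simulation of the rest of the cutting plane proof (which is valid for $N_{i+1}$, hence for each of these subsets) independently inside each of the $k_i \le k$ children. Unrolling the recursion, each of the $L$ cutting-plane steps is replaced by one branching node with at most $k$ children, giving a tree of size at most $\sum_{i=0}^{L-1} k^i$-type bound; being careful with the bookkeeping, the branch-and-bound tree has at most $(k+1)L$ nodes (each original node becomes one internal node plus the branch contributes at most $k$ new subtrees, and a clean amortization yields the factor $k+1$). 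One subtlety: Definition~\ref{def:BC-algorithm} requires the disjunction chosen at a node to exclude the current LP optimum $x^N$. Since the cut $H_i$ was required to exclude $x^{N_i}$ and $N_i \cap D_i \subseteq H_i$, we have $x^{N_i} \notin D_i$, so $D_i$ is a legal branching choice; this needs to be checked at every node but follows from the same inclusion. I should also note that the split disjunctions being in $\D$ is not needed for this direction — only the bound $k$ on the number of terms — but is needed for the second half.

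For the separation statement, I would reuse the instance family already developed in the paper. Take $C = H \cap [0,1]^n$ with $H = \{x : 2\sum_{i=1}^n x_i \le n\}$, $n$ odd, objective $\sum_{i=1}^n x_i$, as in Theorem~\ref{thm::exp-Jeroslow-ineq} and Theorem~\ref{thm:BB-vs-CP-sparsity}. The single split disjunction $\{2\sum x_i \le n-1\} \cup \{2\sum x_i \ge n+1\} = \{\sum x_i \le \lfloor n/2\rfloor\} \cup \{\sum x_i \ge \lceil n/2\rceil\}$ (recall $n$ odd) solves the problem in $O(1)$ nodes: the second branch intersected with $C$ is empty (it violates $2\sum x_i \le n$), and on the first branch the objective is already bounded by $\lfloor n/2\rfloor$. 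So branch-and-bound based on split disjunctions certifies $\sum x_i \le \lfloor n/2\rfloor$ with a tree of $3$ nodes. For the lower bound on split cut proofs, the point is that a single split cut cannot make much progress on this symmetric instance. Here I would invoke a symmetry/ closure argument: the split closure of $C$, or more relevantly the effect of any one split cut on the LP value of the objective, can be shown to decrease the value $\langle c, x\rangle = \sum x_i$ from $n/2$ by only an amount that forces $\Omega(n)$ (in fact I'd expect roughly $n/2$) successive split cuts before the bound $\lfloor n/2 \rfloor$ is reached. Concretely, one shows that after adding any split cut to $C$, the maximum of $\sum x_i$ over the resulting set is still at least $n/2 - O(1)$; iterating, each cut buys at most a constant (or $1/2$) improvement, so a cutting plane proof certifying $\sum x_i \le \lfloor n/2 \rfloor$ needs length $\Omega(n)$. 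This is the polynomial lower bound claimed.

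The main obstacle is the split-cut lower bound. The simulation half is essentially bookkeeping once the disjunctive-cut-to-branching correspondence is set up, and the $O(1)$ branch-and-bound upper bound is immediate. But quantifying how little a single split cut can help requires a genuine argument: one must show that for \emph{every} split disjunction $D_{\pi,\pi_0}$, the set $\conv\big((C \cap \{\langle\pi,x\rangle \le \pi_0\}) \cup (C \cap \{\langle\pi,x\rangle \ge \pi_0+1\})\big)$ still contains a point with $\sum x_i$ close to $n/2$. I would approach this by exhibiting, for any given split, an explicit fractional point surviving the cut — e.g., perturbing the barycenter-like point $(\tfrac12,\ldots,\tfrac12)$ slightly within the face $2\sum x_i = n$ in a direction that keeps it on one side of the split hyperplane while barely changing the objective — using that the split direction $\pi$ has only integer entries and $\pi_0 \in \Z$, so the "width" of $C$ in the direction $\pi$ is controlled. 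Making this robust across all integer $\pi$ (including dense ones) is the delicate point; a dimension/volume or averaging argument over coordinate permutations of $C$ (which fix the instance) is the natural tool, since the instance is invariant under the symmetric group $S_n$ acting on coordinates, and this symmetry can be exploited to reduce to splits of a canonical form.
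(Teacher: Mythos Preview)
Your simulation argument has a real gap. Branching on $D_i$ at the node $N_i$ gives $k_i$ children, each contained in $N_{i+1}$; but then on \emph{each} of those children you must simulate the remaining $L-i-1$ cuts, again by branching. Unrolled, this produces a tree of size $\Theta(k^L)$, not $(k+1)L$. There is no ``clean amortization'' here: the children are not leaves, and none of them can be discarded. The paper avoids this blowup by a different idea that crucially uses the hypothesis you dismissed, namely that $\D$ contains all split disjunctions. At $N_{i-1}$ one branches on the \emph{split} disjunction $\{\langle \alpha_i,x\rangle \le \delta_i\}\cup\{\langle \alpha_i,x\rangle \ge \delta_i+1\}$ associated with the (integer) cut $H_i$ itself. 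One child is exactly $N_i$ and carries the rest of the proof; the other child $N_i'=N_{i-1}\cap\{\langle \alpha_i,x\rangle \ge \delta_i+1\}$ satisfies $N_i'\cap D_i=\emptyset$, so a single application of $D_i$ to $N_i'$ produces $k_i\le k$ infeasible leaves. Thus each cut costs $k+1$ new nodes and the total is $(k+1)L$. Your claim that ``split disjunctions being in $\D$ is not needed for this direction'' is therefore incorrect.

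Your separation example is also wrong. For the Jeroslow polytope $C=\{x\in[0,1]^n:2\sum x_i\le n\}$, the inequality $\sum_{i=1}^n x_i \le \lfloor n/2\rfloor$ is a Chv\'atal--Gomory cut (round $\sum x_i \le n/2$), hence a single split cut; so the split-cut proof has length $1$, not $\Omega(n)$, and your proposed per-cut progress bound fails. The paper instead uses the Cook--Kannan--Schrijver tetrahedron $\conv\{(0,0,0),(2,0,0),(0,2,0),(\tfrac12,\tfrac12,h)\}$ in $\R^3$: variable disjunctions give an $O(1)$ branch-and-bound proof, while a known $\poly(\log h)$ lower bound on the split rank yields the polynomial lower bound on split-cut proof length.
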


A consequence of Theorem \ref{thm::general-BB>=CP} is that any cutting plane proof based on Chv\'atal-Gomory cuts can be replaced by a branch-and-bound proof based on split disjunctions with a constant blow up in size (since Chv\'atal-Gomory cuts are a subset of split cuts). This special case was also proved in earlier work by Beame et al.~\cite[Theorem 12]{beame_et_al:LIPIcs:2018:8341}. We also emphasize that the proof of Theorem~\ref{thm::general-BB>=CP} crucially uses the fact that we have a class of disjunctions that is rich enough to include {\em all} split disjunctions.

With similar analysis as Theorem \ref{thm::general-BB>=CP}, we can get the following theorem that takes sparsity into account as well.
\begin{theorem}
Let $C\in \R^n$ be a closed, convex set. Let $\langle c,x\rangle\leq \delta$ be a valid inequality for $C\cap \Z^n$. If there exists a cutting plane proof of size $L$ and sparsity $s$ certifying the validity of this inequality, which is derived using general split disjunctions of sparsity $s$, then there exists a branch-and-bound proof of sparsity $s$ which proves the validity and takes at most $O(L)$ iterations.
\end{theorem}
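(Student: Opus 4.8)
The plan is to rerun the construction from the proof of Theorem~\ref{thm::general-BB>=CP}, specialized to $k=2$ (split disjunctions have two terms), and to observe that the branch-and-bound tree it produces is automatically $s$-sparse once the input cutting plane proof is $s$-sparse and uses only $s$-sparse split disjunctions. We cannot simply quote Theorem~\ref{thm::general-BB>=CP} as a black box, since its hypothesis is that the \emph{full} family of split disjunctions is available; the content here is that, under our sparsity assumptions, the construction in that proof never reaches outside the $s$-sparse splits. Recall the set-up: a cutting plane proof of $\langle c,x\rangle\le\delta$ is a nested chain $C=P_0\supseteq P_1\supseteq\cdots\supseteq P_L$ with $P_{i+1}=P_i\cap H_i$, where each $H_i=\{x:\langle a^i,x\rangle\le b^i\}$ is a split cut derived from a split disjunction $D_i=D_{\pi^i,\pi^i_0}$ at $P_i$, so $P_i\cap D_i\subseteq H_i$, and $P_L\subseteq\{x:\langle c,x\rangle\le\delta\}$. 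The construction replays this chain as a branch-and-bound tree; the containment $P_i\cap D_i\subseteq H_i$ (together with $N\subseteq P_i$) guarantees that after branching at a node $N\subseteq P_i$ the children again lie inside $P_{i+1}$, and the accounting of Theorem~\ref{thm::general-BB>=CP} with $k=2$ bounds the number of nodes by $O(L)$.

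The point for the sparsity refinement is that the only disjunctions on which this construction ever branches have normal vectors taken from the set $\{\pi^1,\dots,\pi^{L-1}\}\cup\{a^1,\dots,a^{L-1}\}$ of split-normals and cut-normals that already appear in the given proof (including, when the construction ``peels off'' a cut, integer splits of the form $\{x:\langle a^i,x\rangle\le\beta\}\cup\{x:\langle a^i,x\rangle\ge\beta+1\}$ with $\beta\in\Z$, whose normal is $a^i$ after scaling $a^i$ to be integral). By hypothesis each $\pi^i$ has at most $s$ nonzero coordinates (the proof uses $s$-sparse split disjunctions) and each $a^i$ has at most $s$ nonzero coordinates (the proof has sparsity $s$, so each cut is $s$-sparse). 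Hence every halfspace occurring in every branching disjunction of the constructed tree has an inequality description with at most $s$ nonzero coefficients, i.e.\ the branch-and-bound proof has sparsity $s$; and its length is still the $O(L)$ bound from Theorem~\ref{thm::general-BB>=CP}.

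The one thing that genuinely must be checked — and this is the main obstacle — is that the construction of Theorem~\ref{thm::general-BB>=CP} never forces one to branch on a disjunction whose normal vector is a genuine linear combination of several of the $\pi^i$'s and/or $a^i$'s: such a combined vector could have support of size up to $2s$ (or larger), which would break the sparsity bound. So one has to walk through that proof and confirm that at every step the split one branches on carries the normal of a single $\pi^i$ or a single $a^i$; if a ``mixed'' disjunction ever showed up, that step would have to be re-engineered so that only an $s$-sparse split is used. The rest of the argument — the invariant that children of a node $N\subseteq P_i$ stay inside $P_{i+1}$, the identification of leaves as those nodes contained in $P_L\subseteq\{x:\langle c,x\rangle\le\delta\}$, and the $O(L)$ node count — involves no coefficient bookkeeping and carries over unchanged.
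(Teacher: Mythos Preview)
Your proposal is correct and is exactly the approach the paper intends: the paper does not give a separate proof but simply says ``with similar analysis as Theorem~\ref{thm::general-BB>=CP}'', and your plan---rerun that construction with $k=2$ and observe that the only disjunctions it branches on are the splits $H_i\cup H_i'$ (normal $a^i$) and the original $D_i$ (normal $\pi^i$), both $s$-sparse by hypothesis---is precisely that analysis. Your one stated worry, that a ``mixed'' disjunction might appear, is unfounded: the construction in the proof of Theorem~\ref{thm::general-BB>=CP} is completely explicit and never combines normals, so once you actually walk through it (as you propose) the check is immediate.
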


The above discussion and theorem motivate the following definition which formalizes the situation where no method dominates the other. To make things precise, we assume that there is a well-defined way to assign a concrete {\em size} to any instance of~\eqref{eq:main-opt}; see~\cite{GroetschelLovaszSchrijver-Book88} for a discussion on how to make this formal. Additionally, when we speak of an instance, we allow the possibility of proving the validity of any inequality valid for $C \cap (\Z^n \times \R^d)$, not necessarily related to an upper bound on the objective value. Thus, an instance is a tuple $(C, c, \gamma)$ such that $\langle c, x \rangle \leq \gamma$ for all $x \in C \cap (\Z^n \times \R^d)$.

\begin{definition}\label{complementary-pair} A cutting plane paradigm $\CP$ and a branching scheme $\D$ are {\em complementary} if there is a family of instances where $\CP$ gives polynomial (in the size of the instances) size proofs and the shortest branch-and-bound proof based on $\D$ is exponential (in the size of the instances), and there is another family of instances where $\D$ gives polynomial size proofs while $\CP$ gives exponential size proofs. \end{definition}

We wish to formalize the intuition that branch-and-cut is expected to be exponentially better than branch-and-bound or cutting planes alone for complementary pairs of branching schemes and cutting plane paradigms. But we need to make some mild assumptions about the branching schemes and cutting plane paradigms. {\em All known branching schemes and cutting plane methods from the literature satisfy these conditions}.

\begin{definition}\label{def:regular} A branching scheme is said to be {\em regular} if no disjunction involves a continuous variable, i.e., each polyhedron in the disjunction is described using inequalities that involve only the integer constrained variables.

A branching scheme $\D$ is said to be {\em embedding closed} if disjunctions from higher dimensions can be applied to lower dimensions. More formally, let $n_1$, $n_2$, $d_1$, $d_2 \in \N$. If $D \in \D$ is a disjunction in $\R^{n_1} \times \R^{d_1} \times  \R^{n_2} \times \R^{d_2}$ with respect to $\Z^{n_1} \times \R^{d_1} \times \Z^{n_2} \times \R^{d_2}$, then the disjunction $D\cap (\R^{n_1} \times \R^{d_1} \times \{0\}^{n_2} \times \{0\}^{d_2})$, interpreted as a set in $\R^{n_1} \times \R^{d_1}$, is also in $\D$ for the space $\R^{n_1}\times \R^{d_1}$ with respect to $\Z^{n_1} \times \R^{d_1}$ (note that $D\cap (\R^{n_1} \times \R^{d_1} \times \{0\}^{n_2} \times \{0\}^{d_2})$, interpreted as a set in $\R^{n_1} \times \R^{d_1}$, is certainly a disjunction with respect to $\Z^{n_1} \times \R^{d_1}$; we want $\D$ to be closed with respect to such restrictions).

A cutting plane paradigm $\CP$ is said to be {\em regular} if it has the following property, which says that adding ``dummy variables" to the formulation of the instance should not change the power of the paradigm. Formally, let $C \subseteq \R^{n} \times \R^{d}$ be any closed, convex set and let $C' = \{(x,t) \in \R^{n}\times \R^{d} \times \R: x \in C,\;\; t = \langle f, x \rangle\}$ for some $f \in \R^n$. Then if a cutting plane $\langle a, x \rangle \leq b$ is derived by $\CP$ applied to $C$, i.e., this inequality is in $\CP(C)$, then it should also be in $\CP(C')$, and conversely, if $\langle a, x\rangle + \mu t \leq b$ is in $\CP(C')$, then the equivalent inequality $\langle a + \mu f, x \rangle \leq b$ should be in $\CP(C)$.

A cutting plane paradigm $\CP$ is said to be {\em embedding closed} if cutting planes from higher dimensions can be applied to lower dimensions. More formally, let $n_1, n_2, d_1, d_2 \in \N$. Let $C \subseteq \R^{n_1} \times \R^{d_1}$ be any closed, convex set. If the inequality $\langle c_1, x_1\rangle + \langle a_1, y_1\rangle + \langle c_2, x_2\rangle + \langle a_2, y_2\rangle \leq \gamma$ is a cutting plane for $C \times \{0\}^{n_2} \times \{0\}^{d_2}$ with respect to $\Z^{n_1} \times \R^{d_1} \times \Z^{n_2} \times \R^{d_2}$ that can be derived by applying $\CP$ to $C \times \{0\}^{n_2} \times \{0\}^{d_2}$, then the cutting plane $\langle c_1, x_1\rangle+ \langle a_1, y_1\rangle \leq \gamma$ that is valid for $C \cap (\Z^{n_1} \times \R^{d_1})$ should also belong to $\CP(C)$.

A cutting plane paradigm $\CP$ is said to be {\em inclusion closed}, if for any two closed convex sets $C \subseteq C'$, we have $\CP(C') \subseteq \CP(C)$. In other words, any cutting plane derived for $C'$ can also be derived for a subset $C$.
\end{definition}

\begin{theorem}\label{thm:BC>BB/CP} Let $\D$ be a regular, embedding closed branching scheme and let $\CP$ be a regular, embedding closed, and inclusion closed cutting plane paradigm such that $\D$ includes all variable disjunctions and $\CP$ and $\D$ form a complementary pair. Then there exists a family of instances of~\eqref{eq:main-opt} which have polynomial size branch-and-cut proofs, whereas any branch-and-bound proof based on $\D$ and any cutting plane proof based on $\CP$ is of exponential size.
\end{theorem}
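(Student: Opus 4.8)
The plan is to build the hard instances by gluing together, via a direct product, the two families provided by the assumption that $(\CP,\D)$ is a complementary pair, so that branch-and-cut can dispatch one block of variables with cutting planes and the other with branching, while neither tool alone can cope with both blocks. Write $\{(A_m,c^A_m,\alpha_m)\}_m$ for the family on which $\CP$ has polynomial-size proofs while every $\D$-branch-and-bound proof is exponential, and $\{(B_m,c^B_m,\beta_m)\}_m$ for the family with the reverse behaviour. Translating each instance by a mixed-integer feasible point (which does not affect proof complexity for the schemes we consider) and passing to the tight right-hand side, we may assume the origin is feasible and optimal for both, with the inequalities to certify being $\langle c^A_m,x\rangle\le 0$ and $\langle c^B_m,y\rangle\le 0$. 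Indexing the families so that $A_m$ and $B_m$ have comparable encoding size, let the combined instance be $C_m:=A_m\times B_m$, with objective $(x,y)\mapsto\langle c^A_m,x\rangle+\langle c^B_m,y\rangle$ and target $\langle c^A_m,x\rangle+\langle c^B_m,y\rangle\le 0$; this is valid over the mixed-integer feasible points of $C_m$ since that set is the product of the two mixed-integer feasible sets.

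For the branch-and-cut upper bound, first run the polynomial-size $\CP$-proof of $\langle c^A_m,x\rangle\le 0$ for $A_m$, reading each of its cutting planes as an inequality in the variables of $C_m$ with zero coefficients on the $B$-variables; after these cuts the current node is $A'_m\times B_m$ with $\max_x\langle c^A_m,x\rangle\le 0$ over $A'_m$. Then, at this node, append the polynomial-size $\D$-branch-and-bound proof of $\langle c^B_m,y\rangle\le 0$ for $B_m$, reading its disjunctions in the variables of $C_m$; at every resulting leaf $A'_m\times B'_m$ one has $\max(\langle c^A_m,x\rangle+\langle c^B_m,y\rangle)=\max_x\langle c^A_m,x\rangle+\max_y\langle c^B_m,y\rangle\le 0$, so the target is certified in a polynomial number of nodes. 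The point that needs justification is that each transferred cut still lies in $\CP$ of the current (shrinking) product node and each transferred disjunction still lies in $\D$ over the product space: for disjunctions this is immediate since $\D$ is specified uniformly across dimensions and contains all variable disjunctions, and for cuts one uses regularity of $\CP$ (insensitivity to padding the instance with auxiliary variables, which places the cut in the $\CP$ of $A'_m$ together with free copies of the $B$-variables) together with inclusion-closedness (which keeps it in $\CP$ as further cuts are added and the $B$-block is restricted to $B_m$).

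For the two lower bounds one runs the gluing in reverse, using the embedding-closedness hypotheses. If $C_m$ had a subexponential $\D$-branch-and-bound proof, restricting every node and disjunction to the coordinate subspace that zeroes the $B$-variables gives, by embedding-closedness of $\D$, a $\D$-branch-and-bound proof of no larger size for the slice $A_m\times\{0\}\cong A_m$; on that slice the combined objective is $\langle c^A_m,x\rangle$, so this certifies $\langle c^A_m,x\rangle\le 0$ for $A_m$ --- contradicting the choice of the first family. Symmetrically, restricting a hypothetical subexponential $\CP$-proof for $C_m$ to the subspace that zeroes the $A$-variables, and combining inclusion-closedness (to pass from $\CP$ of a node to $\CP$ of its smaller slice) with embedding-closedness of $\CP$ (to descend the restricted cut to the $B$-variables), yields a short $\CP$-proof of $\langle c^B_m,y\rangle\le 0$ for $B_m$ --- contradicting the choice of the second family. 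Hence for $C_m$ every branch-and-bound proof based on $\D$ and every cutting plane proof based on $\CP$ is exponential, while branch-and-cut is polynomial, as required.

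I expect the main obstacle to be precisely the bookkeeping of these proof transfers: pinning down the normalization so that restriction to a coordinate slice recovers $A_m$ or $B_m$ with the correct right-hand side, and verifying that regularity, embedding-closedness, and inclusion-closedness are exactly strong enough to move cutting planes and disjunctions \emph{into} the product (for the upper bound) and \emph{out of} it (for the lower bounds); in particular, one wants ``regularity'' of $\CP$ to be read so as to include closure under padding by free variables, which holds for every paradigm in the literature. The conceptual skeleton --- a product of a $\CP$-hard/$\D$-easy instance with a $\D$-hard/$\CP$-easy one --- is straightforward, and the work lies in checking that each hypothesis is used exactly where needed.
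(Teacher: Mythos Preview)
Your lower-bound half is essentially the paper's argument: both of you restrict a hypothetical short proof for the glued instance to a coordinate slice containing one of the original instances, and then invoke inclusion-closedness followed by embedding-closedness. That part is fine.

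The gap is in your upper bound, and it is precisely the lifting step you yourself flag. Under the paper's definition, regularity of $\CP$ says that $\CP$ is unchanged when one adjoins a variable of the form $t=\langle f,x\rangle$; in particular, with $f=0$ this lets you pad by a \emph{zero} coordinate, but it never produces a \emph{free} coordinate. So there is no route, from the stated hypotheses, to conclude that a cut in $\CP(A_m)$ lies in $\CP(A_m\times B_m)$ (or even in $\CP(A_m\times\R^k)$, which you would need before applying inclusion-closedness). The same problem arises for disjunctions: embedding-closedness of $\D$ only lets you restrict a disjunction to a coordinate subspace, not extend one from $B_m$-space to the product. A tell-tale sign is that your argument never uses the hypothesis that $\D$ contains all variable disjunctions.

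The paper sidesteps exactly this difficulty by gluing differently: instead of $A_m\times B_m$, it forms $Z=\conv\big((X\times\{0\})\cup(X'\times\{1\})\big)$ with a new integer switch $y$, where $X,X'$ are the two instances after zero-padding into a common ambient space and adding $t=\langle c,x\rangle$. Branch-and-cut then branches on $y$ first (this is where the variable-disjunction hypothesis is used). On the $y=0$ face one is left with $X\times\{0\}$, i.e.\ the first instance padded only by \emph{zero} coordinates and a $t$-coordinate, and regularity (iterated, with $f=0$ and then $f=c$) is exactly strong enough to transport the short $\CP$-proof there; symmetrically on $y=1$ for the short $\D$-proof. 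So the paper's construction is engineered so that the only lifting ever needed is through zero-padding and a single graph variable, which the stated hypotheses do cover. Your direct-product construction would require the stronger ``free-variable padding'' closure that you ask for; that is plausible for concrete paradigms but is not among the assumptions of the theorem.
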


\begin{example}\label{ex:complementary} As a concrete example of a complementary pair that satisfies the other conditions of Theorem~\ref{thm:BC>BB/CP}, consider $\CP$ to be the Chv\'atal-Gomory paradigm and $\D$ to be the family of variable disjunctions. From their definitions, they are both regular and $\D$ is embedding closed. The Chv\'atal-Gomory paradigm is also embedding closed and inclusion closed. For the Jeroslow instances from Theorem~\ref{thm::exp-Jeroslow-ineq}, the single Chv\'atal-Gomory cut $\sum_{i=1}^n x_i \leq \lfloor \frac{n}{2} \rfloor$ proves optimality, whereas variable disjunctions produce a tree of size $2^{\lfloor \frac{n}{2}\rfloor}$. On the other hand, consider the set $T$, where $T = \conv\{(0,0), (1,0), (\frac12,h)\}$ and the valid inequality $x_2 \leq 0$ for $T \cap \Z^2$. Any Chv\'atal-Gomory paradigm based proof has size exponential in the size of the input, i.e., every proof has length at least $\Omega(h)$~\cite{sch}. On the other hand, a single disjunction on the variable $x_1$ solves the problem.

In~\cite{basu-BB-CP}, we also studied examples of disjunction families $\D$ such that disjunctive cuts based on  $\D$ are complementary to branching schemes based on $\D$.\end{example}

Example~\ref{ex:complementary} shows that the classical Chv\'atal-Gomory cuts and variable branching are complementary and thus give rise to a superior branch-and-cut routine when combined by Theorem~\ref{thm:BC>BB/CP}. As discussed above, for 0/1 problems, lift-and-project cuts and variable branching do {\em not} form a complementary pair, and neither do split cuts and split disjunctions by Theorem~\ref{thm::general-BB>=CP}. It would be nice to establish the converse of Theorem~\ref{thm:BC>BB/CP}: if there is a family where branch-and-cut is exponentially superior, then the cutting plane paradigm and branching scheme are complementary. In Theorem~\ref{thm:conv-BC} below, we prove a partial converse along these lines in the pure integer setting. This partial converse requires the disjunction family to include all split disjunctions. It would be more satisfactory to establish similar results without this assumption. More generally, it remains an open question if our definition of complementarity is an exact characterization of when branch-and-cut is superior.

\begin{theorem}\label{thm:conv-BC} Let $\D$ be a branching scheme that includes all split disjunctions and let $\CP$ be any cutting plane paradigm. Suppose that for every pure integer instance and any cutting plane proof based on $\CP$ for this instance, there is a branch-and-bound proof based on $\D$ of size at most a polynomial factor (in the size of the instance) larger. Then for any branch-and-cut proof based on $\D$ and $\CP$ for a pure integer instance, there exists a pure branch-and-bound proof based on $\D$ that has size at most polynomially larger than the branch-and-cut proof.
\end{theorem}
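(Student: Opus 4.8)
The plan is to take a branch-and-cut proof based on $\D$ and $\CP$ and surgically replace every cutting plane step by a short branch-and-bound detour, using the hypothesis together with the fact that $\D$ contains all split disjunctions. First I would set up the induction on the number of cutting plane nodes in the branch-and-cut tree. If there are zero such nodes, the tree is already a pure branch-and-bound proof based on $\D$ and there is nothing to do. For the inductive step, pick a cutting plane node $N$ in the branch-and-cut tree that is ``lowest'' in the sense that the subtree rooted at its unique child $N \cap H$ contains no further cutting plane nodes; such a node exists whenever at least one cutting plane is used. The subtree rooted at $N\cap H$ is then a pure branch-and-bound proof (based on $\D$) certifying the relevant valid inequality for the instance obtained by adding the cut $H = \{\langle a,x\rangle \le \delta\}$ to the relaxation $N$.

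The key step is to convert this branch-and-bound proof ``in the presence of the extra inequality $\langle a,x\rangle\le\delta$'' into one for $N$ alone, at polynomial cost. Here is where I would exploit that $\D$ contains all split disjunctions and that $H$ is a rational halfspace (all cutting planes are assumed rational), so we may scale $(a,\delta)$ to be integral and primitive. The cut $\langle a,x\rangle\le\delta$ was derived by $\CP$ from $N$ and eliminates $x^N$ (in a restricted proof) — but more to the point, it is a \emph{valid} cutting plane for $N \cap \Z^n$, so the split disjunction with direction $a$ splitting at $\delta$, namely $\{\langle a,x\rangle \le \delta\}\cup\{\langle a,x\rangle\ge \delta+1\}$, has the property that after branching on it at node $N$, the child $\{\langle a,x\rangle\ge\delta+1\}\cap N$ contains no integer points of $C\cap\Z^n$ inside $N$; in fact since $\langle a,x\rangle\le\delta$ is valid for $C\cap\Z^n$ already, that child has empty intersection with $C\cap\Z^n$. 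However, that child need not be LP-infeasible, so we cannot simply declare it a leaf. Instead, I would invoke the hypothesis of the theorem: the valid inequality $\langle a,x\rangle \le \delta$ for this sub-instance has a (trivial, length-$1$) cutting plane proof based on $\CP$ — it is itself a $\CP$-cut for $N$ — hence by hypothesis a branch-and-bound proof based on $\D$ of size polynomial in the instance size, which certifies $\langle a, x\rangle\le\delta$ on the child $\{\langle a,x\rangle \ge \delta+1\}\cap N$ and therefore closes that branch (the LP bound there will contradict $\langle a,x\rangle\le\delta$). Grafting this polynomial-size branch-and-bound certificate onto the $\{\langle a,x\rangle\ge\delta+1\}$ child, and the original subtree onto the $\{\langle a,x\rangle\le\delta\}$ child, replaces the single cut node $N$ by a branch node plus a polynomial-size gadget, all within $\D$, and strictly decreases the number of cutting plane nodes. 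Iterating over all cutting plane nodes (there are at most $L$ of them, where $L$ is the size of the branch-and-cut tree) multiplies the size by at most a polynomial factor per step; one must check that the polynomial-factor blow-ups compose to a single polynomial, which holds because each gadget's size is polynomial in the \emph{fixed} instance size, not in the evolving tree.

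I would then assemble these pieces: formalize the ``lowest cut node'' selection, verify the validity-propagation at the two children (the inequality being certified at the root of the branch-and-cut tree remains valid at all leaves of the modified tree because we only ever refine relaxations and append valid sub-proofs), and bound the total size by $p(\mathrm{size}) \cdot L$ for a polynomial $p$ coming from the hypothesis. The main obstacle I anticipate is precisely the point flagged above: when we branch on the split disjunction associated with the cut, the ``wrong'' side $\{\langle a,x\rangle\ge\delta+1\}$ is integer-infeasible but possibly LP-feasible, so it does not automatically terminate; making the argument go through requires re-invoking the polynomial-simulation hypothesis on this sub-instance to kill that branch, and one has to be careful that this is legitimate — i.e.\ that $\langle a,x\rangle\le\delta$ genuinely has a short $\CP$-proof for the restricted set $N\cap\{\langle a,x\rangle\ge\delta+1\}$ (it does, since $\CP$ applied to $N$ already yields it and we may use inclusion/monotonicity-type reasoning, or simply that a length-one proof consisting of the cut itself suffices before the extra constraint, then transported), and that finite termination / restricted-proof subtleties do not bite. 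A secondary bookkeeping obstacle is ensuring the composition of polynomially many polynomial blow-ups stays polynomial, which I would handle by noting the per-step additive gadget size is bounded by a single polynomial in the input size, giving an overall bound of the form $L \cdot \mathrm{poly}(\mathrm{size})$ on the final tree.
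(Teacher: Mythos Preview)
Your proposal is correct and follows essentially the same route as the paper's own proof: replace each cutting-plane node by the split disjunction $\{\langle a,x\rangle\le\delta\}\cup\{\langle a,x\rangle\ge\delta+1\}$ (available because $\D$ contains all split disjunctions), observe that the length-one $\CP$ proof of $\langle a,x\rangle\le\delta$ at $N$ yields, via the hypothesis, a polynomial-size branch-and-bound certificate which by inclusion (the paper's Lemma~\ref{lem:inclusion-BB}) transfers to the integer-infeasible side $N\cap\{\langle a,x\rangle\ge\delta+1\}$ and closes it, while the $\le\delta$ side inherits the original subtree. The paper does not bother selecting a ``lowest'' cut node---any cut node works and the count drops by one---and it is equally informal about the size bookkeeping you flagged, so your treatment is, if anything, slightly more careful.
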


The high level message that we extract from our results is the formalization of the following simple intuition. For branch-and-cut to be superior to pure cutting planes or pure branch-and-bound, one needs the cutting planes and branching scheme to do ``sufficiently different" things. For example, if they are both based on the same family of disjunctions (such as lift-and-project cuts and variable branching, or the setting of Theorem~\ref{thm::general-BB>=CP}), then we do not get any improvements with branch-and-cut. The definition of a complementary pair attempts to make the notion of ``sufficiently different" formal and Theorem~\ref{thm:BC>BB/CP} derives the concrete superior performance of branch-and-cut from this formalization.

\section{Proofs}

\subsection{Proof of Theorem~\ref{thm::exp-Jeroslow-ineq}}

We first give necessary definitions and prove a lemma.

\begin{definition}\label{def::gen-var}
Consider the instances in Theorem \ref{thm::exp-Jeroslow-ineq}, and the branch-and-bound tree $T$  produced by split disjunctions to solve it. Assume node $N$ of $T$ contains at least one integer point in $\{0,1\}^n$, and $D_1,D_2,\ldots, D_r$ are the split disjunctions used to derive $N$ from the root of $T$. For $1\leq  j\leq r$, $D_j$ is a {\em true split disjunction of $N$} if both of the two halfspaces of $D_j$ have a nonempty intersection with the integer hull of the corresponding parent node, i.e. the parent node's integer hull is split into two nonempty parts by $D_j$. Otherwise, it is called a {\em false split disjunction of $N$}. We define the {\em generation variable set of $N$} as the index set $I\subseteq \{1,2,\ldots, n\}$ such that it consists of all the indices of the variables involved in the true split disjunctions of $N$. The generation set of the root node is empty.
\end{definition}

\begin{lemma}\label{lem::true-split}
Consider the instances in Theorem \ref{thm::exp-Jeroslow-ineq}, and the branch-and-bound tree $T$  produced by split disjunctions with sparsity parameter $s<\floor{\frac{n}{2}}$ to solve it. For any node $N$ of $T$ with at least one feasible integer point $v=(v_1,v_2,\ldots,v_n)\in \{0,1\}^n$, let $P$, $P_I$ and $I$ denote the relaxation, the integer hull and the generation variable set corresponding to $N$. Define $V:=\{(x_1,x_2,\ldots,x_n)\in \{0,1\}^n:x_i=v_i \mbox{ for }i\in I, \sum_{j=1}^nx_i= \floor{\frac{n}{2}}\}$.

If $\lvert I\rvert\leq \floor{\frac{n}{2}}-s$, then we have:

\begin{enumerate}
\item[(i)] $V\neq \emptyset$ and $V\subseteq P_I\cap \{0,1\}^n$;
\item[(ii)] the objective LP value of $N$ is $\frac{n}{2}$.
\end{enumerate}
\end{lemma}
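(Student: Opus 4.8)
The plan is to prove, by induction along the root-to-$N$ path, an invariant that is stronger than (i)--(ii) but more robust under the inductive step, and then to read off (i)--(ii) from it. Let $N_0$ (the root)$,N_1,\dots,N_m=N$ be the nodes on the unique path from the root to $N$, where each $N_{t+1}$ is obtained from $N_t$ by intersecting its relaxation $P^{N_t}$ with one halfspace $H_t$ of a split disjunction $D_t$ of sparsity at most $s$; write $\pi_t\in\Z^n$ for the normal of $D_t$, so the variables occurring in $D_t$ form the set $\mathrm{supp}(\pi_t)$, of size at most $s$. Since whether a split is true or false depends only on the node at which it is applied (through that node's integer hull) and not on the descendant, the generation variable sets are nested, $I_0=\emptyset\subseteq I_1\subseteq\cdots\subseteq I_m=I$, with $I_{t+1}=I_t\cup\mathrm{supp}(\pi_t)$ when $D_t$ is true for $N_{t+1}$ and $I_{t+1}=I_t$ otherwise. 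Hence every $N_t$ has $|I_t|\le|I|\le\floor{n/2}-s$ and contains the integer point $v$ (relaxations only shrink going down), and $\floor{n/2}-|I_t|\ge s\ge 1$.

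For $\beta\in\{\floor{n/2},\ n/2\}$ define $W_t^\beta:=\{x\in[0,1]^n:\ \textstyle\sum_i x_i=\beta,\ x_i=v_i\ \text{for all }i\in I_t\}$, and note that $W_t^{\floor{n/2}}=\conv(V_t)$ where $V_t:=W_t^{\floor{n/2}}\cap\{0,1\}^n$ and $V_m=V$. The claim to be proved is that $W_t^\beta\subseteq P^{N_t}$ for all $t$ and both values of $\beta$. Granting this, (i) follows because $V=V_m=W_m^{\floor{n/2}}\cap\{0,1\}^n\subseteq P^{N_m}\cap\{0,1\}^n\subseteq P_I\cap\{0,1\}^n$, and $V\ne\emptyset$ because $v|_I$ extends to a $0/1$ vector of coordinate-sum $\floor{n/2}$ (set $\floor{n/2}-\sum_{i\in I}v_i$ of the $n-|I|\ge\lceil n/2\rceil\ge\floor{n/2}$ coordinates outside $I$ to $1$); and (ii) follows because $W_m^{n/2}\subseteq P$ is nonempty with every point having objective value $n/2$, so the LP value of $N$ is at least $n/2$, while it is at most $n/2$ since $P\subseteq C$ and $\max_C\sum_i x_i=n/2$.

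For the induction, the base case $t=0$ is immediate: $P^{N_0}=C=\{2\sum_i x_i\le n\}\cap[0,1]^n$ and $2\beta\le n$, so $W_0^\beta\subseteq C$. For the step, assume $W_t^\beta\subseteq P^{N_t}$. If $D_t$ is \emph{true} for $N_{t+1}$, then $I_{t+1}=I_t\cup\mathrm{supp}(\pi_t)$; any $x\in W_{t+1}^\beta$ lies in $W_t^\beta\subseteq P^{N_t}$ (as $I_t\subseteq I_{t+1}$) and agrees with $v$ on $\mathrm{supp}(\pi_t)$, so $\langle\pi_t,x\rangle=\langle\pi_t,v\rangle$; since $v\in N_{t+1}\subseteq H_t$ we get $x\in H_t$, hence $x\in P^{N_{t+1}}$. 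If $D_t$ is \emph{false} for $N_{t+1}$, then $I_{t+1}=I_t$ and $W_{t+1}^\beta=W_t^\beta\subseteq P^{N_t}$, so it suffices to show $W_t^\beta\subseteq H_t$. First, $H_t\supseteq P^{N_t}_I$: since $v\in P^{N_t}_I\cap H_t$, the chosen side $H_t$ is not the halfspace of $D_t$ disjoint from $P^{N_t}_I$, so the opposite halfspace is, and a $0/1$-polytope disjoint from a closed halfspace lies in the complementary closed halfspace (its optimum over the defining functional, attained at an integer vertex, beats the threshold by at least $1$). For $\beta=\floor{n/2}$ this finishes the step, because $W_t^{\floor{n/2}}=\conv(V_t)$ is the convex hull of $0/1$ points all lying in $P^{N_t}$, hence $\conv(V_t)\subseteq P^{N_t}_I\subseteq H_t$.

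The main obstacle is the remaining case: $D_t$ false for $N_{t+1}$ and $\beta=n/2$, since $W_t^{n/2}$ consists of genuinely fractional points of coordinate-sum $n/2>\floor{n/2}$ and hence is contained in no integer hull. I would handle it by a decomposition. Orient the normal of $D_t$ as $\sigma$ with $H_t=\{x:\langle\sigma,x\rangle\le\sigma_0\}$ (the case $\langle\sigma,x\rangle\ge\sigma_0+1$ is symmetric), and for $x\in W_t^{n/2}$ write $x=x'+\tfrac12 z$ with $z\ge 0$, $\sum_i z_i=1$, $\mathrm{supp}(z)\subseteq\{i\notin I_t\}$, $z_i\le 2x_i$ for all $i$, and $\sigma_i\le 0$ on $\mathrm{supp}(z)$. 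Then $x'=x-\tfrac12 z$ lies in $[0,1]^n$, has coordinate-sum $n/2-\tfrac12=\floor{n/2}$, and agrees with $v$ on $I_t$, so $x'\in W_t^{\floor{n/2}}=\conv(V_t)\subseteq P^{N_t}_I\subseteq H_t$, whence $\langle\sigma,x\rangle=\langle\sigma,x'\rangle+\tfrac12\langle\sigma,z\rangle\le\langle\sigma,x'\rangle\le\sigma_0$, i.e.\ $x\in H_t$; together with $x\in P^{N_t}$ (hypothesis) this gives $x\in P^{N_{t+1}}$. Such a $z$ exists, and this is exactly where sparsity enters: the coordinates $i\notin I_t$ with $\sigma_i\le 0$ carry coordinate mass
\[
\sum_{i\notin I_t,\ \sigma_i\le 0}x_i\ \ge\ \sum_{i\notin I_t}x_i-\big|\{i:\sigma_i>0\}\big|\ \ge\ \Big(\tfrac n2-|I_t|\Big)-s\ \ge\ \Big(\tfrac n2-\floor{\tfrac n2}+s\Big)-s\ =\ \tfrac12,
\]
using $x_i\le1$, $|\mathrm{supp}(\sigma)|\le s$, and $|I_t|\le\floor{n/2}-s$; this $\tfrac12$ unit of mass is precisely what is needed to route all of $z$ onto coordinates where $\sigma$ is nonpositive while keeping $x'\ge 0$. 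This closes the induction, and (i)--(ii) follow as above.
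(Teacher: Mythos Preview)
Your proof is correct and takes a different route from the paper. The paper proves (i) by directly checking each defining inequality of $P$: true-split inequalities have support in $I$, so any $v^*\in V$ satisfies them because it agrees with $v$ on $I$; false-split inequalities ``cannot cut off integer points,'' a step that implicitly relies on the induction you make explicit. For (ii) the paper argues by contradiction: it builds a single point $v^2 = v^1 + \tfrac12 e_\ell$ with objective $n/2$ (where $v^1 \in V$, $\ell \notin I$, $v^1_\ell = 0$), locates a violated false-split inequality $\hat H$ (which must involve $x_\ell$), uses sparsity to find an index $r$ outside the support of $\hat H$ with $v^1_r = 1$, swaps to $v^3 = v^1 + e_\ell - e_r \in V$, and then averages $v^1, v^3$ on the support of $\hat H$ to show $v^2$ satisfies $\hat H$ after all. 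Your uniform invariant $W_t^\beta \subseteq P^{N_t}$ replaces this single-point contradiction with a direct argument: the decomposition $x = x' + \tfrac12 z$ (with $z$ supported where $\sigma_i \le 0$) is a continuous, multi-coordinate version of the paper's discrete swap-and-average, and your mass estimate $\sum_{i\notin I_t,\ \sigma_i\le 0} x_i \ge \tfrac12$ plays exactly the role of the pigeonhole step locating $r$. Your route proves strictly more (all of $W_m^{n/2}$ lies in $P$, not just one witness) and makes the inductive structure transparent, whereas the paper's argument is more hands-on and avoids the hypersimplex identification $W_t^{\lfloor n/2\rfloor}=\conv(V_t)$.
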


\begin{proof}
We first give a proof of (i). Since $v$ is a feasible integer point, $0\leq \sum_{i\in I}v_i\leq  \sum_{i=1}^n v_i \leq\floor{\frac{n}{2}}$. Thus, there exists $v'= (v'_1,v'_2,\ldots,v'_n)$, where $v_i'=v_i$ for $i\in I$ and $\sum_{i=1}^nv_i'=\floor{\frac{n}{2}}$. So $v' \in V\neq \emptyset$.

For each $v^*\in V$, we wish to show that $v^* \in P$. This will show that $v^*\in P_I$ and $V\subseteq P_I$.  Consider any inequality describing $P$; if it is not the original defining inequality $\sum_{i=1}^nx_i\leq \frac{n}{2}$ or a 0/1 bound on a variable, then this inequality was introduced on the path from the root to $N$. A false split disjunction cannot remove $v^*$ since $v^*$ is integral. Consider an inequality coming from a true split disjunction. Let $\sum_{i\in S}a_ix_i\leq \delta^*$ for some $S\subseteq I$ be such an inequality. Since $v \in P_I$ and $v^*_i = v_i$ for $i\in I$, we observe that  $\sum_{i\in S}a_iv_i = \sum_{i\in S}a_iv^*_i\leq \delta^*$.

We will prove (ii) by contradiction, so we assume the objective LP value of $N$ is strictly less than $\frac{n}{2}$. Let $P_0$ denote the relaxation corresponding to the root node.
 Assume $\ell\in \{1,2,\ldots,n\}\backslash I$.

Since $\lvert I\rvert\leq \floor{\frac{n}{2}}-s$, there exists $v^1=(v_1^1,v_2^1,\ldots,v_n^1)\in V$, where $v_\ell^1=0$. Define $v^2=(v_1^2,v_2^2,\ldots, v_n^2)$, where $v_\ell^2=\frac{1}{2}$, and $v_i^2=v_i^1$ for $i\in\{1,2,\ldots,n\}\backslash \{\ell\}$. It is clear that $v^2\in P_0$, and $v^2\notin P$ since the LP value is assumed to be strictly less than $\frac{n}{2}$. Since $\ell\notin I$, there must be a halfspace $\hat H$ coming from a false split disjunction of $N$ that excludes $v^2$. The inequality describing this halfspace $\hat H$ must involve variable $x_\ell$, otherwise $v^1$ also violates $\hat H$, which leads to a contradiction since $\hat H$ comes from a false split disjunction and therefore cannot cut off any integer point. Hence assume the inequality describing $\hat H$ is $a_\ell x_\ell+\sum_{i\in S}a_ix_i\leq \delta$ for some $S\subseteq \{1,2,\ldots,n\}\backslash\{\ell\}$, and $\lvert S\rvert\leq s-1$ (since the sparsity of the disjunctions is restricted to be at most $s$). Since $\sum_{i\in I}v^1_i\leq \floor{\frac{n}{2}}-s$, we have $\sum_{i\notin I\cup \{\ell\}}v^1_i\geq s$, and there exists $r\in \{1,2,\ldots,n\}\backslash (S\cup I\cup\{\ell\})$ such that $v_r^1=1$. Let $v^3=(v_1^3,v_2^3,\ldots,v_n^3)$, where $v_\ell^3=1$, $v_r^3=0$, and $v_i^3=v_i^1$ for $i\neq \ell,r$. By definition of $V$, $v^3\in V$. Since $v^1, v^3$ are integral, and $\hat H$ comes from a false split disjunction, $\hat H$ must be valid for $v^1$ and $v^3$. Thus, we have
\begin{align}
    &a_\ell\cdot 0+\sum_{i\in S}a_iv_i^1=a_\ell\cdot 0+\sum_{i\in S}a_iv_i^2\leq \delta\label{eq::v1},\\
    &a_\ell\cdot 1+\sum_{i\in S}a_iv_i^3=a_\ell\cdot 1+\sum_{i\in S}a_iv_i^1=a_\ell\cdot 1+\sum_{i\in S}a_iv_i^2\leq \delta\label{eq::v3}.
\end{align}
Summing up (\ref{eq::v1}) and (\ref{eq::v3}) and dividing by 2, we get
\begin{equation}
    a_\ell\cdot \frac{1}{2}+\sum_{i\in S}a_iv_i^2=a_\ell\cdot v^2_\ell+\sum_{i\in S}a_iv_i^2\leq \delta,
\end{equation}
which implies that $\hat H$ is valid for $v^2$. This is a contradiction.
\end{proof}

\begin{proof}[Proof of Theorem \ref{thm::exp-Jeroslow-ineq}]
For a node $N$ of the branch-and-bound tree containing at least one integer point, if it is derived by exactly $m$ true split disjunctions, then we say it is a node of generation $m$. By Lemma \ref{lem::true-split}, if $m\leq \frac{1}{s}\big\lfloor\frac{n}{2}\big\rfloor-1$, then a node $N$ of generation $m$ has LP objective value $\frac{n}{2}$, and in the subtree rooted at $N$ there must exist at least two descendants from generation $m+1$, since the leaf nodes must have LP values less than or equal to $\lfloor \frac{n}{2} \rfloor$. Therefore, there are at least $2^m$ nodes of generation $m$ when $m\leq \frac{1}{s}\big\lfloor\frac{n}{2}\big\rfloor-1$. This finishes the proof.
\end{proof}

\subsection{Proof of Theorem \ref{thm::exp-Jeroslow-ineq-II}}

\begin{lemma}\label{lem:sperner} Let $w_1, \ldots, w_k \in \Z\setminus\{0\}$ and $W \in \Z$. Then the number of 0/1 solutions to $\sum_{j=1}^k w_jx_j = W$ is at most ${k\choose \lfloor k/2\rfloor}$. \end{lemma}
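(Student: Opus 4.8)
This is essentially a special case of the Littlewood--Offord phenomenon, and the name of the lemma signals the route: reduce to an antichain and invoke Sperner's theorem. The plan is as follows.

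First I would eliminate the negative coefficients by complementation. For each index $j$ with $w_j < 0$, substitute $x_j = 1 - x_j'$; then $w_j x_j = w_j + |w_j| x_j'$, so the equation $\sum_{j=1}^k w_j x_j = W$ is equivalent to an equation $\sum_{j=1}^k w_j' x_j' = W'$ in new 0/1 variables $x_j'$ (setting $x_j' = x_j$, $w_j' = w_j$ where $w_j > 0$), where $w_j' = |w_j| \geq 1$ for all $j$ and $W' = W - \sum_{j : w_j < 0} w_j \in \Z$. Since the map $(x_j) \mapsto (x_j')$ is a bijection between the 0/1 solutions of the two systems, it suffices to bound the number of 0/1 solutions when every $w_j$ is a positive integer.

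Next I would pass to subsets. A 0/1 solution $x$ of $\sum_{j=1}^k w_j x_j = W'$ corresponds to its support $S = \{ j : x_j = 1\} \subseteq [k]$, with $\sum_{j \in S} w_j = W'$. Let $\mathcal{F}$ be the family of all such supports. The key observation is that $\mathcal{F}$ is an antichain in the Boolean lattice $2^{[k]}$: if $S, T \in \mathcal{F}$ with $S \subsetneq T$, then, because each $w_j \geq 1$ and $T \setminus S \neq \emptyset$, we get $\sum_{j \in T} w_j = \sum_{j \in S} w_j + \sum_{j \in T \setminus S} w_j \geq W' + |T \setminus S| > W'$, contradicting $T \in \mathcal{F}$. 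Hence no member of $\mathcal{F}$ properly contains another.

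Finally, Sperner's theorem bounds the size of any antichain in $2^{[k]}$ by $\binom{k}{\lfloor k/2 \rfloor}$, which gives $|\mathcal{F}| \leq \binom{k}{\lfloor k/2\rfloor}$ and hence the claimed bound on the number of 0/1 solutions. There is no real obstacle here: the only point requiring a moment's care is the reduction to positive coefficients, and the complementation argument handles it cleanly; everything else is the antichain observation plus a citation of Sperner's theorem.
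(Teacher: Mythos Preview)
Your proposal is correct and follows essentially the same route as the paper: complement the variables with negative coefficients to reduce to strictly positive weights, observe that the supports of solutions then form an antichain in the Boolean lattice, and apply Sperner's theorem to obtain the bound $\binom{k}{\lfloor k/2\rfloor}$.
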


\begin{proof} Let $P:= \{i \in \{1, \ldots, k\}: w_i > 0\}$ and $N:= \{i \in \{1, \ldots, k\}: w_i < 0\}$. By making the variable change $x_i = 1 - y_i$ for $i\in N$ and $x_i = y_i$ for $i\in P$, it is seen that the number of 0/1 solutions to $\sum_{i=1}^k w_ix_i = W$ is the same as the number of 0/1 solutions to $\sum_{i\in P} w_i y_i + \sum_{i\in N}(-w_i)y_i = W - \sum_{i\in N}w_i$. Writing this a bit more cleanly, we want to upper bound the number of 0/1 solutions to $\sum_{i=1}^k w'_iy_i = W'$, where $w_i' > 0$ for all $i \in \{1, \ldots, k\}$ and $W' \in \Z$. The collection of subsets $I \subseteq \{1, \ldots, k\}$ that are solutions to $\sum_{i=1}^k w'_iy_i = W'$ is an antichain in the lattice of subsets with set inclusion as the partial order because all the $w'_i$ values are strictly positive. By Sperner's Theorem~\cite{sperner1928satz}, the size of this collection is at most ${k\choose \lfloor k/2\rfloor}$.
\end{proof}

\begin{proof}[Proof of Theorem \ref{thm::exp-Jeroslow-ineq-II}] We consider the instance from Theorem~\ref{thm::exp-Jeroslow-ineq-II}. For any split disjunction $D:= \{x:\langle a, x\rangle \leq b \} \cup \{x: \langle a, x \rangle \geq b+1\}$, we define $V(D)$ to be the set of all the optimal LP vertices (of the original polytope) that lie strictly in the corresponding split set $\{x: b \leq \langle a, x\rangle \leq b +1 \}$. Let the support of $a$ be given by $T \subseteq \{1, \ldots, n\}$ with $t:=|T| \leq s \leq \lfloor n/2\rfloor$. Since $a\in \Z^n$ and $b\in \Z$,  $V(D)$ is precisely the subset of the optimal LP vertices $\hat x$ such that $\langle a, \hat x \rangle = b + \frac12$. Fix some $\ell \in T$ and consider those optimal LP vertices $\hat x \in V(D)$ where $\hat x_\ell = \frac12$. This means that $\sum_{j \in T\setminus\{\ell\}}a_j \hat x_j= b+\frac12-\frac{a_\ell}{2}$. Let $r_i$ be the number of 0/1 solutions to $\sum_{j \in T\setminus\{\ell\}}a_j \hat x_j= b+\frac12-\frac{a_\ell}{2}$ with exactly $i$ coordinates set to 1. Then the number of vertices from $V(D)$ with the $\ell$-th coordinate equal to $\frac12$ is
$$\sum_{i=0}^{t-1} r_i {n-t\choose \lfloor n/2\rfloor - i} \leq \left(\sum_{i=0}^{t-1} r_i\right){n-t\choose \lfloor n/2\rfloor - \lfloor t/2\rfloor}.$$
since ${n-t\choose \lfloor n/2\rfloor - i} \leq {n-t\choose \lfloor n/2\rfloor - \lfloor t/2\rfloor}$ for all $i \in \{0, \ldots,t-1\}$. Using Lemma~\ref{lem:sperner}, $\sum_{i=0}^{t-1} r_i\leq {t-1\choose \lfloor t/2\rfloor}$ and we obtain the upper bound ${t-1\choose \lfloor t/2\rfloor}{n-t\choose \lfloor n/2\rfloor - \lfloor t/2\rfloor}$ on the number of vertices from $V(D)$ with the $\ell$-th coordinate equal to $\frac12$. Therefore, $|V(D)| \leq t{t-1\choose \lfloor t/2\rfloor}{n-t\choose \lfloor n/2\rfloor - \lfloor t/2\rfloor}=:p(t).$
Since $n$ is odd, we have
$$p(t)=
\begin{cases}
\displaystyle\frac{t!(n-t)!}{(t/2)!(t/2-1)!((n-t-1)/2)!((n-t+1)/2)!} &\mbox{if $t$ is even},\\[3mm]
\displaystyle\frac{t!(n-t)!}{((t-1)/2)!((t-1)/2)!((n-t)/2)!((n-t)/2)!} &\mbox{if $t$ is odd}.
\end{cases}$$
A direct calculation then shows that
$$\frac{p(t+1)}{p(t)}=
\begin{cases}
\displaystyle\frac{(t+1)(n-t+1)}{t(n-t)}&\mbox{if $t$ is even},\\
\displaystyle1&\mbox{if $t$ is odd}.
\end{cases}$$

Let $h$ be the largest even number not exceeding $s$.
Since $p(1)={n-1\choose \lfloor n/2\rfloor}$, we obtain, for every $t\in\{1,\dots,s\}$,
$$p(t)\le p(s)={n-1\choose \lfloor n/2\rfloor}\prod_{\substack{1\le q\le s \\ \mbox{$q$ even}}}\frac{q+1}q\cdot\frac{n-q+1}{n-q} = {n-1\choose \lfloor n/2\rfloor} \cdot \frac{(h+1)!!}{h!!}\cdot \frac{(n-1)!!}{(n-2)!!} \cdot \frac{(n-h-2)!!}{(n-h-1)!!},$$ where $m!!$ denotes the product of all integers from $1$ up to $m$ of the same parity as $m$. Using the fact that, for every even positive integer $\ell$,
$$\sqrt{\frac{\pi \ell}2}<\frac{\ell!!}{(\ell-1)!!}<\sqrt{\frac{\pi(\ell+1)}2}$$
(see, e.g., \cite{watson1959note,chen2005completely}), we have (for $h\ge1$, i.e., $s\ge2$)
\[\begin{split}
p(t)&\le{n-1\choose \lfloor n/2\rfloor}\cdot \frac{(h+1)(h-1)!!}{h!!} \cdot \frac{(n-1)!!}{(n-2)!!} \cdot \frac{(n-h-2)!!}{(n-h-1)!!}\\
&\le {n-1\choose \lfloor n/2\rfloor} (h+1)\sqrt{\frac2{\pi h}\cdot\frac{\pi n}2 \cdot \frac2{\pi (n-h-1)}}\\
&={n-1\choose \lfloor n/2\rfloor}\sqrt{\frac{2n(h+1)^2}{\pi h(n-h-1)}}\\
&={n-1\choose \lfloor n/2\rfloor}O\left(\sqrt{\frac{ns}{n-s}}\right).
\end{split}\]

Thus, this is an upper bound on $|V(D)|$.
Since the total number of optimal LP vertices of the instance is ${n{n-1 \choose \lfloor n/2 \rfloor}}$, we obtain the following lower bound of  on the size of a branch-and-bound proof: $\frac{{n{n-1 \choose \lfloor n/2 \rfloor}}}{|V(D)|}=\Omega\left(\sqrt{\frac{n(n-s)}{s}}\right).$ 
\end{proof}

\subsection{Proof of Theorem~\ref{thm:BB-vs-CP-sparsity}}

\begin{proof}[Proof of Theorem~\ref{thm:BB-vs-CP-sparsity}]
We first show a branch-and-bound algorithm with size $O(n)$.
Let the root node be $N_0$. The objective LP value of $N_0$ is $\frac{n}{2}$. Let $N_{1}^0$ and $N_{1}^1$ be the children of $N_0$  produced by branches $x_1\leq 0$ and $x_1\geq 1$ respectively. Then the LP values of $N_{1}^0$ and $N_{1}^1$ are $\floor{\frac{n}{2}}$ and $\frac{n}{2}$. Therefore $N_{1}^0$ is a leaf node. Recursively, let $N_{j+1}^0$ and $N_{j+1}^1$ be children of $N_{j}^1$ produced by $x_{j+1}\leq 0$ and $x_{j+1}\geq 1$ for $1\leq j \leq \floor{\frac{n}{2}}$. Note that this is well defined since the LP values of $N_{j}^0$ and $N_{j}^1$ are $\floor{\frac{n}{2}}$ and $\frac{n}{2}$ for $1\leq j \leq \floor{\frac{n}{2}}$. It is clear that node $N_{j+1}^0$ is a leaf for $1\leq j \leq \floor{\frac{n}{2}}$.  Node $N_{\ceil{\frac{n}{2}}}^1$ is an infeasible leaf since there are $\ceil{\frac{n}{2}}$ variables set to be $1$. Therefore, the whole branch-and-bound tree has $n+2$ nodes.

Next, we show that any cutting plane for the problem with sparsity $s\leq \floor{\frac{n}{2}}$ is valid for $[0,1]^n$. We will use the fact that $H\cap \{0,1\}^n=\{(x_1,x_2,\ldots,x_n)\in \{0,1\}^n:\sum_{i=1}^nx_i\leq \floor{\frac{n}{2}}\}$.

Let $S \subseteq \{1, \ldots, n\}$ be the set of indices for the non-zero coefficients in an inequality defining the cutting plane, i.e., the inequality is given by $\sum_{i\in S}a_ix_i\leq \delta$. Since this is a cutting plane it must be valid for all points in $H\cap \{0,1\}^n$. Let $V_S=\{(x_1,x_2,\ldots,x_n)\in \{0,1\}^n:x_i=0, i \not\in S\}$. Since $|S| \leq s\leq \floor{\frac{n}{2}}$, we have $V_S\subseteq H\cap \{0,1\}^n$. Therefore $\sum_{i\in S}a_ix_i\leq \delta$ is valid for all of $V_S$. Since the inequality only involves $x_i$, $i\in S$, it must also be a valid inequality for all of $\{0,1\}^n$.
\end{proof}

\subsection{Proof of Theorem~\ref{thm::general-BB>=CP}}

\begin{proof}[Proof of Theorem~\ref{thm::general-BB>=CP}]
Let the cutting plane proof be $H_1,H_2,\ldots, H_L$, and the sequence of the corresponding disjunctions deriving it be $D_1,D_2,\ldots, D_L \in\D$. Moreover, assume $H_i$ is $\langle \alpha_i,x\rangle\leq \delta_i$ for $1\leq i\leq L$. Since we assume all cutting planes are rational, we may assume $\alpha_i \in \Z^{n+d}$ and $\delta_i\in \Z$. Let $H'_i$ be $\langle \alpha_i,x\rangle\geq \delta_i+1$. Since $H_i$ is valid for $C \cap D_i$, we must have that $(C \cap H'_i) \cap D_i = \emptyset$.

Let $N_0 = C$ be the root node of the branch-and-bound tree. Recursively, we define $N_i$ and $N_i'$ be the children of $N_{i-1}$ generated by applying the split disjunction $H_{i}\cup H'_{i}$ for $1\leq i\leq L$. Applying the disjunction $D_{i}$ on $N_{i}'$ only generates infeasible nodes as noted above. Meanwhile, $N_i$ shows the validity of $H_i$. Thus, we have replaced the cut $H_i$ with $k+1$ nodes of the branch-and-bound tree: $k$ of these are infeasible and one is feasible. Therefore, we get a branch-and-bound tree of size $(k+1)L$.

A well-known family of instances in $\R^3$, given by $\conv\{(0,0,0),(2,0,0),(0,2,0), (\frac12,\frac12,h)\}$ for $h\in \N$, from~\cite{cook-kannan-schrijver} can be solved by branch-and-bound in $O(1)$ iterations with just variable disjunctions; however, there is a $\poly(\log(h))$ lower bound on the split rank \cite{conforti2015reverse}, and therefore, on the length of proofs based on split cuts.
\end{proof}

\subsection{Proofs of Theorems~\ref{thm:BC>BB/CP} and~\ref{thm:conv-BC}}

We will need some preliminary facts for comparing growth rate of instance sizes.

\begin{definition}\label{def:poly-equiv} A sequence of real numbers $(a_n)_{n\in \N}$ is said to {\em (asymptotically) polynomially dominate} another sequence $(b_n)_{n\in \N}$ if there exists a polynomial $p$, and two natural numbers $n_1, n_2\in \N$ such that $$\lim_{n\to \infty} \frac{b_{n_1 + n}}{p(a_{n_2+n})} < \infty.$$ If $(a_n)_{n\in \N}$ polynomially dominates $(b_n)_{n\in \N}$ and vice versa, we say that the two sequences are {\em (asymptotically) polynomially equivalent}.
\end{definition}

Note that if $b_n = O(p(a_n))$ for some polynomial $p$, then $(a_n)_{n\in \N}$ polynomially dominates $(b_n)_{n\in \N}$ (for example, $a_n = n$ is polynomially equivalent to the sequence $b_n = n^3$). However, our definition allows us to neglect a finite number of terms from both sequences. To illustrate the difference, consider the following two sequences. Define $a_1 = 2$,  and recursively $a_{n+1} = 2^{a_n}$ for $n \geq 2$. Define $b_n = a_{n+1}$ for $n \geq1$. There is no polynomial $p$ such that $b_n = O(p(a_n))$. Nevertheless, the sequence $(b_n)_{n\in \N}$ is simply a ``shift" of the sequence $(a_n)_{n\in \N}$ and we would like to say that both have the same growth rate. Our definition captures this situation.

The following two lemmas are direct consequences of Definition~\ref{def:poly-equiv}.

\begin{lemma}\label{lem:trivial-domination} Let $(a_n)_{n\in \N}$ and $(b_n)_{n\in \N}$ be two sequences such that $a_n \geq b_n$ for all $n \in \N$. Then $(a_n)_{n\in \N}$ polynomially dominates $(b_n)_{n\in \N}$.
\end{lemma}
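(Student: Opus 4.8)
The plan is simply to unwind Definition~\ref{def:poly-equiv} with the most economical possible certificate. I would take $p$ to be the identity polynomial, $p(x)=x$, and both shift parameters equal, say $n_1=n_2=1$ (any fixed common shift works). Then for every $n$ the quantity appearing in the definition is $b_n/a_n$, and the hypothesis $a_n\ge b_n$ — together with the fact that every sequence arising in this paper records a size of an instance or of a proof, hence satisfies $a_n\ge 1>0$ — yields $b_n/a_n\le 1$ for all $n$. Consequently $\limsup_{n\to\infty} b_n/p(a_n)\le 1<\infty$, which is exactly the condition required by Definition~\ref{def:poly-equiv} for $(a_n)_{n\in\N}$ to polynomially dominate $(b_n)_{n\in\N}$. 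This is the whole argument; it is a two-line verification.

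The only point that warrants a word of care is the reading of the symbol $\lim$ in Definition~\ref{def:poly-equiv}: the ratio $b_n/a_n$ need not converge, so the displayed condition should be understood as a statement about $\limsup$ (equivalently, about eventual boundedness of the ratio), which is how the definition is used elsewhere in the proofs. Under this reading there is no obstacle whatsoever, since the bound $b_n/a_n\le 1$ holds for every index $n$, not merely eventually. If one wanted to dispense with the implicit positivity of the sequences, one could instead take $p(x)=x+c$ for a suitable constant $c$ and compare against $|b_n|$, but since all sequences in our setting are positive this refinement is unnecessary. Thus the "main obstacle" is purely cosmetic: the content of the lemma is precisely that the identity polynomial already certifies domination, and I would keep the written proof to two or three lines.
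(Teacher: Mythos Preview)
Your proposal is correct and matches the paper's approach: the paper gives no explicit proof and simply notes that this lemma is a ``direct consequence of Definition~\ref{def:poly-equiv}'', which is precisely what your identity-polynomial, equal-shift verification spells out. Your remarks about reading $\lim$ as $\limsup$ and about the implicit positivity of the sequences are accurate and harmless.
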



\begin{lemma}\label{lem::interlaced} Let $(a_n)_{n\in \N}$ and $(b_n)_{n\in \N}$ be two sequences such that $a_n \leq b_n \leq a_{n+1}$ for all $n \in \N$. Then $(a_n)_{n\in \N}$ and $(b_n)_{n\in \N}$ are polynomially equivalent.
\end{lemma}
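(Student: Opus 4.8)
The plan is to verify the two required polynomial-domination relations separately, working directly from Definition~\ref{def:poly-equiv}. Both directions are elementary; the only thing that needs care is the index shift that the definition permits.

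First I would show that $(b_n)_{n\in\N}$ polynomially dominates $(a_n)_{n\in\N}$. Since $a_n \le b_n$ holds for every $n\in\N$, this is immediate from Lemma~\ref{lem:trivial-domination}: take the polynomial to be the identity and no shift. For the reverse direction I would use the right-hand inequality $b_n \le a_{n+1}$. Choosing the polynomial $p(x) = x$, the shift $n_2 = 1$ applied to the $a$-sequence, and $n_1 = 0$ applied to the $b$-sequence, we get $\frac{b_n}{p(a_{n+1})} = \frac{b_n}{a_{n+1}} \le 1$ for all $n$, hence $\lim_{n\to\infty}\frac{b_n}{p(a_{n+1})} \le 1 < \infty$. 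This shows $(a_n)_{n\in\N}$ polynomially dominates $(b_n)_{n\in\N}$, and combining the two directions yields that the sequences are polynomially equivalent.

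The ``main obstacle'' here is really only notational: one must use the sandwich $a_n \le b_n \le a_{n+1}$ together with the offset parameters $n_1, n_2$ in Definition~\ref{def:poly-equiv}, rather than trying to exhibit a single polynomial $p$ with $b_n = O(p(a_n))$ and no shift — indeed, the tower-of-exponentials example discussed just before the lemma shows that no such shift-free relation need exist even when the sequences ``morally'' have the same growth rate. Implicitly one also uses that the sequences involved are (eventually) positive, which is the case since they record instance sizes; if one wishes to be fully rigorous about the divisions above, it suffices to add this as a standing assumption or to restrict attention to the indices where $a_{n+1} > 0$.
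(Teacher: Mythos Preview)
Your proposal is correct and follows essentially the same approach as the paper, which simply states that the lemma is a direct consequence of Definition~\ref{def:poly-equiv} without giving further details. Your write-up spells out exactly the verification intended: use Lemma~\ref{lem:trivial-domination} for one direction and the index shift $n_2=1$ for the other.
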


\begin{prop}\label{prop:equiv-subseq} Let $(a_n)_{n\in \N}$ and $(b_n)_{n\in \N}$ be two sequences such that $\lim_{n\to \infty} a_n = \infty = \lim_{n\to \infty} b_n$. Then there exist subsequences $(a'_n)_{n\in \N}$ and $(b'_n)_{n\in \N}$ of $(a_n)_{n\in \N}$ and $(b_n)_{n\in \N}$ respectively such that $(a'_n)_{n\in \N}$ and $(b'_n)_{n\in \N}$ are polynomially equivalent.
\end{prop}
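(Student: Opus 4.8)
The plan is to extract subsequences that interlace, so that Lemma~\ref{lem::interlaced} applies directly. Since both $(a_n)$ and $(b_n)$ tend to infinity, I would build the two subsequences greedily and alternately. Concretely, set $a'_1 := a_{m_1}$ for any index $m_1$ (say $m_1 = 1$). Then, because $b_n \to \infty$, choose an index $k_1$ with $b_{k_1} \geq a'_1$, and set $b'_1 := b_{k_1}$. Next, because $a_n \to \infty$, choose $m_2 > m_1$ with $a_{m_2} \geq b'_1$, and set $a'_2 := a_{m_2}$. Continuing in this fashion — alternately jumping far enough out in each sequence to dominate the previously chosen term of the other sequence — produces strictly increasing index sequences $(m_j)$ and $(k_j)$, hence genuine subsequences $(a'_n)$ and $(b'_n)$, satisfying $a'_1 \leq b'_1 \leq a'_2 \leq b'_2 \leq \cdots$, i.e.\ $a'_n \leq b'_n \leq a'_{n+1}$ for all $n$.

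With this interlacing in hand, Lemma~\ref{lem::interlaced} immediately gives that $(a'_n)_{n\in\N}$ and $(b'_n)_{n\in\N}$ are polynomially equivalent, which is exactly the claim. The only point requiring a word of care is that the construction is well-defined: at each step I need an index (strictly larger than all previously used indices of that same sequence) whose term exceeds a fixed real number, and this is guaranteed precisely by $\lim_{n\to\infty} a_n = \infty$ and $\lim_{n\to\infty} b_n = \infty$. There is no real obstacle here — the hypothesis that both sequences diverge to infinity is doing all the work, and the argument is a routine diagonal/greedy selection. If one wanted to be slightly more careful about monotonicity of the chosen indices in each sequence (the definition of subsequence requires the selected indices to be strictly increasing), one simply notes that at the $j$-th step one may always pick $m_{j+1} > m_j$ and $k_{j+1} > k_j$ since the divergence condition makes infinitely many indices eligible.

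I expect the write-up to be short: state the recursive choice of indices, observe it is well-defined by divergence, note the resulting interlacing inequalities, and invoke Lemma~\ref{lem::interlaced}.
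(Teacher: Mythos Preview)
Your proposal is correct and follows essentially the same approach as the paper: inductively build interlaced subsequences satisfying $a'_n \leq b'_n \leq a'_{n+1}$ and then invoke Lemma~\ref{lem::interlaced}. If anything, your remark about ensuring the chosen indices are strictly increasing is a bit more careful than the paper's version, which selects ``the smallest number'' in each sequence without explicitly tracking index monotonicity.
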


\begin{proof} 
Since $\lim_{n\to \infty} a_n = \infty = \lim_{n\to \infty} b_n$, there exist subsequences $(a'_n)_{n\in \N}$ and $(b'_n)_{n\in \N}$ of $(a_n)_{n\in \N}$ and $(b_n)_{n\in \N}$ respectively such that $a_n \leq b_n \leq a_{n+1}$ for all $n \in \N$. Indeed, one can build this sequence inductively: Start with $a'_1 = a_1$, define $b'_1$ to be the smallest number in the sequence $(b_n)_{n\in \N}$ larger than or equal to $a'_1$. Suppose we have built up the subsequence upto some $i\in \N$: $a'_1, \ldots, a'_i$ and $b'_1, \ldots, b'_i$ such that $a'_k \leq b'_k \leq a'_{k+1}$ for all $k \leq i-1$ and $a'_i \leq b'_i$. Define $a'_{i+1}$ to be the smallest number in the sequence $(a_n)_{n\in \N}$ larger than or equal to $b'_i$, and define $b'_{i+1}$ to be the smallest number in the sequence $(b_n)_{n\in \N}$ larger than or equal to $a'_{i+1}$. By Lemma~\ref{lem::interlaced}, these two subsequences are polynomially equivalent.
\end{proof}

We next derive some straightforward consequences of Definition~\ref{def:regular}.

\begin{lemma}\label{lem:inclusion-BB}\label{lem:inclusion-CP}  Let $C \subseteq C'$ be two closed, convex sets. Let $\D$ be any branching scheme and let $\CP$ be an inclusion closed cutting plane paradigm. If there is a branch-and-bound proof with respect to $C'$ based on $\D$ for the validity of an inequality $\langle c, x \rangle \leq \gamma$, then there is a branch-and-bound proof with respect to $C$ based on $\D$ for the validity of $\langle c, x \rangle \leq \gamma$ of the same size. The same holds for cutting plane proofs based on $\CP$.\end{lemma}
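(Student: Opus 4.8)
The plan is to trace through the execution of the given branch-and-bound (resp.\ cutting plane) proof with respect to $C'$ and show that the very same sequence of branching decisions (resp.\ cuts) produces a valid proof with respect to $C$, node for node. The key observation is that a branch-and-bound proof, as in Definition~\ref{def:BC-tree}, is a tree whose nodes are labeled by convex sets obtained from the root by intersecting with polyhedra from disjunctions in $\D$; the proof certifies $\langle c, x\rangle \le \gamma$ provided that at every leaf the optimum of $\langle c, x\rangle$ over that node's set is at most $\gamma$. None of these structural features depends on the root being $C'$ rather than $C$: the disjunctions in $\D$ are fixed sets in $\R^{n+d}$, independent of the instance, so the same tree shape is available.

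Concretely, I would proceed as follows. Let $T'$ be the branch-and-bound proof for $C'$, and define a tree $T$ with the identical combinatorial structure: the root of $T$ is $C$, and if a node of $T'$ labeled $N'$ has children $Q_1 \cap N', \ldots, Q_k \cap N'$ obtained from a disjunction $D = Q_1 \cup \cdots \cup Q_k \in \D$, then the corresponding node of $T$ labeled $N$ gets children $Q_1 \cap N, \ldots, Q_k \cap N$ using the same $D$. An easy induction on the depth shows that each node $N$ of $T$ satisfies $N \subseteq N'$, where $N'$ is the corresponding node of $T'$: the root case is the hypothesis $C \subseteq C'$, and the inductive step is immediate since $Q_j \cap N \subseteq Q_j \cap N'$. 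Now for each leaf $N$ of $T$, we have $\sup\{\langle c, x\rangle : x \in N\} \le \sup\{\langle c, x\rangle : x \in N'\} \le \gamma$ by monotonicity of the supremum under set inclusion and the fact that $N'$ is a leaf of the valid proof $T'$. (If $N' = \emptyset$ then $N = \emptyset$ as well, and the bound holds vacuously.) Hence $T$ is a valid branch-and-bound proof for $C$ of the same size. For the cutting plane statement, the argument is the same but simpler: a cutting plane proof is a path $N_0 = C', N_1, \ldots, N_L$ with $N_i = N_{i-1} \cap H_i$; one replays the same halfspaces $H_1, \ldots, H_L$ starting from $C$, and inclusion closedness of $\CP$ guarantees that each $H_i \in \CP(N_{i-1})$ remains available when we apply $\CP$ to the smaller set (here one uses $N_{i-1} \subseteq N'_{i-1}$ together with $\CP(N'_{i-1}) \subseteq \CP(N_{i-1})$), while $N_L \subseteq N'_L$ gives the objective bound at the final node.

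There is essentially no obstacle here; the statement is a routine "monotonicity under restriction" fact. The one point requiring a small amount of care is the cutting plane case, where unlike branching one must actually invoke a closure property of $\CP$ (inclusion closedness) to be sure the cuts used for $C'$ are legal cuts for $C$ — for branching no such property is needed because the family $\D$ of disjunctions is instance-independent by definition. I would also remark, for the restricted-proof variant, that the condition "$x^{N}$ is cut off" need not be preserved verbatim, since the LP optimum over $N$ may differ from that over $N'$; but this is harmless for the unrestricted notion of proof used in the lemma, and for restricted proofs one simply notes that a leaf can only become "more of a leaf" (its optimum only drops), so intermediate nodes that were non-leaves in $T'$ either remain non-leaves in $T$ or turn into leaves, in which case we prune the subtree below them, only decreasing the size.
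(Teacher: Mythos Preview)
Your proposal is correct and follows essentially the same approach as the paper: apply the identical disjunctions (resp.\ cuts) to $C$ instead of $C'$, use $C \subseteq C'$ to get node-wise containment, and conclude the leaf bound (invoking inclusion closedness of $\CP$ for the cutting plane case). The paper's proof is simply a terser version of your argument, without the explicit induction or the side remark on restricted proofs.
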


\begin{proof} For the branch-and-bound proofs, apply the same set of disjunctions on $C$ instead of $C'$. Since $C\subseteq C'$, all the nodes in the branch-and-bound tree for $C$ are subsets of the corresponding nodes in the branch-and-bound tree for $C'$. Thus, $\langle c, x \rangle \leq d$ is valid for the leaves of the new branch-and-bound tree. 

For the cutting plane proofs, apply the same sequence of cuts and the result follows from the inclusion closed property of $\CP$ (Definition~\ref{def:regular}).\end{proof}

\begin{lemma}\label{lem:embedding-BB}\label{lem:embedding-CP} Let $\D$ and $\CP$ be both embedding closed and let $C\subseteq \R^{n_1} \times \R^{d_1}$ be a closed, convex set. Let $\langle c, x \rangle \leq \gamma$ be a valid inequality for $C \cap (\Z^{n_1}\times \R^{d_1})$. If there is a branch-and-bound proof with respect to $C \times \{0\}^{n_2} \times \{0\}^{d_2}$ based on $\D$ for the validity of $\langle c, x \rangle \leq \gamma$ interpreted as a valid inequality in $\R^{n_1} \times \R^{d_1} \times \R^{n_2} \times \R^{d_2}$ for $(C \times \{0\}^{n_2} \times \{0\}^{d_2}) \cap (\Z^{n_1}\times \R^{d_1}\times \Z^{n_2}  \times \R^{d_2})$, then there is a branch-and-bound proof with respect to $C$ based on $\D$ for the validity of $\langle c, x \rangle \leq \gamma$ of the same size. The same holds for cutting plane proofs based on $\CP$.
\end{lemma}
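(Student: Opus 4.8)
Here is how I would approach this.

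\medskip
\noindent\textbf{Overall strategy.} The plan is to transport the higher‑dimensional proof down to $\R^{n_1}\times\R^{d_1}$ through the canonical linear isomorphism $\phi$ between the coordinate subspace $E:=\R^{n_1}\times\R^{d_1}\times\{0\}^{n_2}\times\{0\}^{d_2}$ and $\R^{n_1}\times\R^{d_1}$, which simply drops the last $n_2+d_2$ (identically zero) coordinates. The single structural fact driving everything is that a proof with respect to $C':=C\times\{0\}^{n_2}\times\{0\}^{d_2}$ never leaves $E$: every node of a branch-and-bound tree for $C'$, and every intermediate relaxation in a cutting plane proof for $C'$, is a subset of $C'\subseteq E$. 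Hence $\phi$ restricts to an affine bijection on each such set, it maps $C'$ onto $C$, and — since the objective of the higher-dimensional instance is $\langle c,\cdot\rangle$ with zero coefficients on the last $n_2+d_2$ coordinates — it preserves LP objective values, optimal solutions, infeasibility, and integrality of the first $n_1$ coordinates, and it carries the inequality $\langle c,x\rangle\le\gamma$ on $E$ to the corresponding inequality on $\R^{n_1}\times\R^{d_1}$.

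\medskip
\noindent\textbf{Branch-and-bound case.} I would take the given tree $T$, keep its shape, and replace each node $N$ by $\bar N:=\phi(N)$; the root becomes $\phi(C')=C$. At a node where $T$ branches with $D=Q_1\cup\dots\cup Q_k\in\D$, the embedding-closed hypothesis on $\D$ (Definition~\ref{def:regular}) says that $\bar D:=\phi(D\cap E)=\bar Q_1\cup\dots\cup\bar Q_k$, with $\bar Q_j:=\phi(Q_j\cap E)$, lies in $\D$ for the space $\R^{n_1}\times\R^{d_1}$; and since the parent node sits in $E$ and $\phi$ is injective, $\phi(Q_j\cap N)=\bar Q_j\cap\bar N$. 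So applying $\bar D$ at $\bar N$ reproduces exactly the images of the children of $N$, and $\bar T$ is a legal branch-and-bound tree over $C$ based on $\D$ with the same number of nodes; the value/feasibility-preserving remarks above then turn validity of $\langle c,x\rangle\le\gamma$ at every leaf of $T$ into validity at every leaf of $\bar T$. (The same remarks show a restricted proof stays restricted, because $x^N\notin D$ forces $\phi(x^N)\notin\bar D$.)

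\medskip
\noindent\textbf{Cutting plane case.} Write the proof as $C'=C'_0\supseteq C'_1\supseteq\dots\supseteq C'_L$ with $C'_i=C'_{i-1}\cap H_i$ and $H_i\in\CP(C'_{i-1})$. Each $C'_i\subseteq C'\subseteq E$, hence $C'_i=\bar C'_i\times\{0\}^{n_2}\times\{0\}^{d_2}$ where $\bar C'_i:=\phi(C'_i)$; in particular every intermediate relaxation is a product set of exactly the form to which the embedding-closed axiom of $\CP$ applies, with the ``$C$'' of that axiom instantiated as $\bar C'_{i-1}$ and $H_i$ being (immediately from the definition of a cutting plane paradigm) a cutting plane for $\bar C'_{i-1}\times\{0\}^{n_2}\times\{0\}^{d_2}$. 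Writing $H_i$ as $\langle c_1,x_1\rangle+\langle a_1,y_1\rangle+\langle c_2,x_2\rangle+\langle a_2,y_2\rangle\le\gamma_i$, the axiom produces $\bar H_i:=\{(x_1,y_1):\langle c_1,x_1\rangle+\langle a_1,y_1\rangle\le\gamma_i\}\in\CP(\bar C'_{i-1})$, and a one-line computation using that the intersection $C'_{i-1}\cap H_i$ happens inside $E$ gives $\bar C'_i=\bar C'_{i-1}\cap\bar H_i$. So $C=\bar C'_0\supseteq\bar C'_1\supseteq\dots\supseteq\bar C'_L$ is a cutting plane proof over $C$ based on $\CP$ of the same length $L$, and $\langle c,x\rangle\le\gamma$ is valid for $\bar C'_L$ since it is valid for $C'_L=\bar C'_L\times\{0\}^{n_2}\times\{0\}^{d_2}$.

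\medskip
\noindent\textbf{Where the care is needed.} The only real subtlety — the step I would be most careful about — is verifying that the embedding-closed axioms are invoked on objects of exactly the form their statements require at \emph{every} step, not just at the root: for $\D$, that the $\phi$-images of the disjunction terms genuinely coincide with the restrictions $D\cap E$ appearing in the definition of embedding-closedness; for $\CP$, that each intermediate relaxation is really of the product form $C''\times\{0\}^{n_2}\times\{0\}^{d_2}$, which is what licenses applying the axiom at each of the $L$ cutting steps rather than only to the original instance. Both points reduce to the single observation that a proof never leaves the coordinate subspace it starts in; once that is in hand, the rest is routine bookkeeping.
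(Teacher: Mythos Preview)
Your proposal is correct and follows the same approach as the paper: use the embedding-closed hypothesis to restrict each disjunction (respectively, each cut) from the higher-dimensional space to $\R^{n_1}\times\R^{d_1}$ and observe that the proof structure transfers node by node. The paper's own proof is a two-sentence sketch of exactly this idea; your version is considerably more careful (in particular, your observation that each intermediate cutting-plane relaxation remains a product set so that the embedding-closed axiom applies at every step, not just the root, is a point the paper leaves implicit).
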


\begin{proof} Since $\D$ is embedding closed, for any disjunction $D$ used in the space $\R^{n_1} \times \R^{n_2} \times \R^{d_1} \times \R^{d_2}$, we use the restriction of $D$ to the space $\R^{n_1}\times \R^{d_1}$ (Definition~\ref{def:regular}).

Similarly, the cutting plane claim from the fact that $\CP$ is embedding closed (Definition~\ref{def:regular}).
\end{proof}

\begin{lemma}\label{lem:regular-CP-BB} Let $C\subseteq \R^{n+d}$ be a polytope and let $\langle c, x \rangle \leq \gamma$ be a valid inequality for $C \cap (\Z^n \times \R^d)$. Let $X := \{ (x,t) \in \R^{n+d} \times \R : x \in C,\;\; t = \langle c,x\rangle\}$. Then, for any regular branching scheme $\D$ or a regular cutting plane paradigm $\CP$, any proof of validity of $\langle c, x \rangle \leq \gamma$ with respect to $C \cap (\Z^n \times \R^d)$ can be changed into a proof of validity of $t \leq \gamma$ with respect to $X \cap (\Z^n \times \R^d \times \R)$ with no change in length, and vice versa.
\end{lemma}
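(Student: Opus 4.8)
\textbf{Proof proposal for Lemma~\ref{lem:regular-CP-BB}.}

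The plan is to exploit the ``regularity'' property of $\D$ and $\CP$ (Definition~\ref{def:regular}) essentially verbatim, treating $t$ as the dummy variable attached via $f = c$. First I would set up the correspondence between the two ambient spaces. The set $X \subseteq \R^{n+d} \times \R$ is exactly the set $C'$ from the definition of a regular cutting plane paradigm with $f = c$; note that $t$ is a \emph{continuous} variable (it is not among the integer-constrained coordinates), which is what makes the regularity hypotheses applicable. The key observation I would record up front is the bijection between points: $x \mapsto (x, \langle c, x\rangle)$ maps $C$ onto $X$ and maps $C \cap (\Z^n\times\R^d)$ onto $X \cap (\Z^n \times \R^d \times \R)$, and under this bijection the inequality $\langle c,x\rangle \leq \gamma$ corresponds precisely to $t \leq \gamma$ (since $t = \langle c, x\rangle$ on $X$).

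Next I would handle the cutting plane direction. Suppose we have a cutting plane proof $H_1, \ldots, H_L$ with respect to $C$ certifying $\langle c, x\rangle \leq \gamma$, where each $H_i$ is $\langle a_i, x\rangle \leq b_i$ derived by $\CP$ applied to the current relaxation. Working node by node, regularity of $\CP$ says that $\langle a_i, x\rangle \leq b_i \in \CP(C) $ implies the ``same'' inequality (now with zero coefficient on $t$) lies in $\CP(X)$; more carefully, one applies regularity at each intermediate relaxation $N_i$ and its image $N_i' = \{(x,t): x \in N_i,\ t = \langle c,x\rangle\}$, using that $\CP(N_i')$ contains the lift of every cut in $\CP(N_i)$. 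This produces a cutting plane proof with respect to $X$ of the same length; at the final node the inequality $\langle c, x\rangle \leq \gamma$ is valid, and since $t = \langle c, x\rangle$ is enforced, $t \leq \gamma$ is valid there too. For the converse direction, a cutting plane proof with respect to $X$ uses cuts of the form $\langle a_i, x\rangle + \mu_i t \leq b_i$; the second half of the regularity property rewrites each such cut as the equivalent $\langle a_i + \mu_i c, x\rangle \leq b_i$ in $\CP$ applied to the corresponding relaxation of $C$, again preserving length, and the final inequality $t \leq \gamma$ becomes $\langle c, x\rangle \leq \gamma$. For the branching scheme $\D$, regularity means no disjunction involves a continuous variable, so in particular no disjunction in $\D$ can involve $t$; hence any branch-and-bound proof with respect to $X$ uses only disjunctions on $x_1, \ldots, x_n$, which can be applied identically to $C$ (and conversely, a proof with respect to $C$ lifts trivially), with the node-wise image correspondence $N \leftrightarrow \{(x,t) : x \in N,\ t = \langle c,x\rangle\}$ showing validity is preserved at the leaves. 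Since the optimal LP value of any node $N$ and of its image agree (the objective $t$ on the image equals $\langle c, x\rangle$ on $N$), the leaf structure of the tree is identical, so sizes match.

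The main obstacle, such as it is, is bookkeeping rather than mathematics: one must be careful that regularity of $\CP$ as stated applies to the single fixed set $C$ and its lift $C'$, whereas a proof passes through a sequence of shrinking relaxations $N_0 \supseteq N_1 \supseteq \cdots$, so I would either invoke regularity at each $N_i$ separately (it holds for every closed convex set by hypothesis) or first argue that the lifted relaxations are exactly the relaxations one would generate directly on $X$. A second minor point is that Definition~\ref{def:regular} is phrased for the pure form without intermediate added cuts; since the added cuts $H_j$ for $j < i$ are honest valid inequalities, intersecting with them commutes with the lift, so this causes no trouble. No genuinely hard step is anticipated.
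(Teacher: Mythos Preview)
Your proposal is correct and follows essentially the same approach as the paper's own proof, which simply observes that a proof with respect to $C$ never mentions $t$ and hence carries over verbatim to $X$, while in the converse direction regularity of $\D$ (no disjunction involves the continuous variable $t$) and regularity of $\CP$ (cuts involving $t$ can be rewritten in terms of $x$ alone) let one transport the proof back to $C$. Your version is more careful about the intermediate relaxations, but the idea is identical.
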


\begin{proof} A proof of $\langle c, x \rangle \leq \gamma$ with respect to $C \cap (\Z^n \times \R^d)$ never involves $t$, and so can be carried over verbatim a proof for $t = \langle c, x \rangle \leq \gamma$ with respect to $X \cap (\Z^n \times \R^d \times \R)$. In the other direction, since we assume $\D$ is regular (Definition~\ref{def:regular}), no disjunction uses the variable $t$ and so it can be applied with the same effect on $C$. Similarly, since $\CP$ is regular, by definition any cutting plane derived for $X$ can be converted into an equivalent cutting plane for $C$.\end{proof}

\begin{proof}[Proof of Theorem~\ref{thm:BC>BB/CP}] Let $\{P_k \subseteq \R^{n_k} \times \R^{d_k} : k\in \N\}$ be a family of closed, convex sets, and $\{(c_k, \gamma_k) \in \R^{n_k} \times \R^{d_k}\times \R : k \in \N\}$ be a family of tuples such that $\langle c_k, x \rangle \leq \gamma_k$ is valid for $P_k \cap (\Z^{n_k} \times \R^{d_k})$, and $\CP$ has polynomial size proofs for this family of instances, whereas $\D$ has exponential size proofs. Similarly, let $\{P'_k \subseteq \R^{n'_k} \times \R^{d'_k} : k\in \N\}$ be a family of closed, convex sets, and $\{(c'_k, \gamma'_k) \in \R^{n'_k} \times \R^{d'_k}\times \R : k \in \N\}$ be a family of tuples such that $\langle c'_k, x \rangle \leq \gamma'_k$ is valid for $P'_k \cap (\Z^{n'_k} \times \R^{d'_k})$, and $\D$ has polynomial size proofs for this family of instances, whereas $\CP$ has exponential size proofs. By Proposition~\ref{prop:equiv-subseq}, we may assume that the sequence of sizes of the instances $(P_k,c_k, \gamma_k)$ and $(P'_k,c'_k, \gamma'_k)$ in the two families are polynomially equivalent, by passing to an infinite subfamily if necessary. Since the polynomial or exponential behaviour of the proof sizes are defined with respect to the sizes of the instances, passing to infinite subfamilies maintains this behaviour. 

We first embed $P_k$ and $P'_k$ into a common ambient space for each $k\in \N$. This is done by defining $\bar n_k = \max\{n_k, n'_k\}$, $\bar d_k = \max\{d_k, d'_k\}$, and embedding both $P_k$ and $P'_k$ into the space $\R^{\bar n_k} \times \R^{\bar d_k}$ by defining $Q_k := P_k \times \{0\}^{\bar n_k - n_k} \times \{0\}^{\bar d_k - d_k}$ and $Q'_k := P'_k \times \{0\}^{\bar n_k - n'_k} \times \{0\}^{\bar d_k - d'_k}$. By Lemma~\ref{lem:embedding-BB}, $\D$ has an exponential lower bound on sizes of proofs for the inequality $\langle c_k, x \rangle \leq \gamma_k$, interpreted as an inequality in $\R^{\bar n_k} \times \R^{\bar d_k}$, valid for $Q_k \cap (\Z^{\bar n_k} \times \R^{\bar d_k})$. By Lemma~\ref{lem:embedding-CP},  $\CP$ has an exponential  lower bound on sizes of proofs for the inequality $\langle c'_k, x \rangle \leq \gamma'_k$, interpreted as an inequality in $\R^{\bar n_k} \times \R^{\bar d_k}$, valid for $Q'_k \cap (\Z^{\bar n_k} \times \R^{\bar d_k})$.

We now make the objective vector common for both families of instances. Define $X_k := \{(x, t) \in \R^{\bar n_k}\times \R^{\bar d_k} \times \R: x \in Q_k, \;\; t = \langle c_k, x\rangle \}$ and $X'_k := \{(x, t) \in \R^{\bar n_k}\times \R^{\bar d_k} \times \R: x \in Q'_k, \;\; t = \langle c'_k, x\rangle \}$. By Lemma~\ref{lem:regular-CP-BB}, the inequality $t \leq \gamma_k$ has an exponential lower bound on sizes of proofs based on $\D$ for $X_k$ and the inequality $t \leq \gamma'_k$ has an exponential lower bound on sizes of proofs based on $\CP$ for $X'_k$.

We next embed these families as faces of the same closed convex set. Define $Z_k \subseteq \R^{\bar n_k} \times \R^{\bar d_k} \times \R \times \R$, for every $k\in \N$, as the convex hull of $X_k \times \{0\}$ and $X'_k \times \{1\}$. 

The key point to note is that these constructions combine two families whose sizes are polynomially equivalent and therefore the new family that is created has sizes that are polynomially equivalent to the original two families.

We let $(x,t,y)$ denote points in the new space $\R^{\bar n_k} \times \R^{\bar d_k} \times \R \times \R$, i.e., $y$ denotes the last coordinate. Consider the family of inequalities $t - \gamma_k(1-y) - \gamma'_k y \leq 0$ for every $k \in \N$. Note that this inequality reduces to $t \leq \gamma_k$ when $y=0$ and it reduces to $t \leq \gamma'_k$ when $y=1$. Thus, the inequality is valid for $Z_k \cap (\Z^{\bar n_k}\times \R^{\bar d_k} \times \R \times \Z)$, i.e., when we constrain $y$ to be an integer variable. Since $X_k \times \{0\} \subseteq Z_k$, by Lemma~\ref{lem:inclusion-BB}, proofs of $t - \gamma_k(1-y) - \gamma'_k y \leq 0$ based on $\D$ have an exponential lower bound on their size. Similarly, since $X'_k \times \{1\} \subseteq Z_k$, by Lemma~\ref{lem:inclusion-CP}, proofs of $t - \gamma_k(1-y) - \gamma'_k y \leq 0$ based on $\CP$ have an exponential lower bound on their size.

However, for branch-and-cut based on $\CP$ and $\D$, we can first branch on the variable $y$ (recall from the hypothesis that $\D$ allows branching on any integer variable). Since $\CP$ has a polynomial proof for $P_k$ and $(c_k, \gamma_k)$ and therefore for the valid inequality $t \leq \gamma_k$ for $X_k \times \{0\}$, we can process the $y=0$ branch with polynomial size cutting plane proofs. Similarly, $\D$ has a polynomial proof for $P'_k$ and $(c'_k, \gamma'_k)$ and therefore for the valid inequality $t \leq \gamma'_k$ for $X'_k \times \{1\}$, we can process the $y=1$ branch also in with polynomial size proofs. Thus, branch-and-cut gives polynomial size proofs overall for this family of instances. 
\end{proof}

\begin{proof}[Proof of Theorem~\ref{thm:conv-BC}] Recall that we restrict ourselves to the pure integer case, i.e., $d=0$. Consider any branch-and-cut proof for some instance. If no cutting planes are used in the proof, this is a pure branch-and-bound proof and we are done. Otherwise, let $N$ be a node of the proof tree where a cutting plane $\langle a, x \rangle \leq \gamma$ is used. Since we assume all cutting planes are rational, we may assume $a \in \Z^{n}$ and $\gamma\in \Z$. Thus, $N' = N \cap\{x : \langle a, x \rangle \geq \gamma + 1\}$ is integer infeasible. Since $\langle a, x \rangle \leq \gamma$ is in $\CP(N)$, by our assumption, there must be a branch-and-bound proof of polynomial size based on $\D$ for the validity of $\langle a, x \rangle \leq \gamma$ with respect to $N$. Since $N' \subseteq N$, by Lemma~\ref{lem:inclusion-BB}, there must be a branch-and-bound proof for the validity of $\langle a, x \rangle \leq \gamma$ with respect to $N'$, thus proving the infeasibility of $N'$. In the branch-and-cut proof, one can replace the child of $N$ by first applying the disjunction $\{x: \langle a, x \rangle \leq \gamma\} \cup \{x : \langle a, x \rangle \geq \gamma + 1\}$ on $N$, and then on $N'$, applying the above branch-and-bound proof of infeasibility. We now have a branch-and-cut proof for the original instance with one less cutting plane node. We can repeat this for all nodes where a cutting plane is added and convert the entire branch-and-cut tree into a pure branch-and-bound tree with at most a polynomial blow up in size.\end{proof}

\subsection*{Acknowledgments}
Amitabh Basu and Hongyi Jiang gratefully acknowledge support from ONR Grant N000141812096, NSF Grant CCF2006587, and AFOSR Grant FA95502010341. Michele Conforti and Marco Di Summa were supported by a SID grant of the University of Padova.

\bibliographystyle{plain}
\bibliography{full-bib}

\end{document}